\theoremstyle{definition}
\newtheorem{definition}{Definition}
\newtheorem{remark}[definition]{Remark}
\theoremstyle{plain}
\newtheorem{theorem}[definition]{Theorem}
\newtheorem{corollary}[definition]{Corollary}
\begin{document}

\title[On the $n\times n\times n $ Rubik's Cube] {On the $n\times n\times n $ Rubik's Cube}


\author[S. Bonzio]{Stefano Bonzio}
\address{Stefano Bonzio, The Czech Academy of Sciences, Prague\\
Czech Republic}
\email{stefano.bonzio@gmail.com}

\author[A.Loi]{Andrea Loi}
\address{Andrea Loi, University of Cagliari\\
Italy}
         \email{loi@unica.it}

 \author[L.Peruzzi]{Luisa Peruzzi}
\address{Luisa Peruzzi, University of Cagliari\\
Italy}
\email{luisa$\_$peruzzi@virgilio.it}

\thanks{}

\subjclass[2000]{Primary: 05E99; Secondary: 20B99}
\keywords{Combinatorial puzzles; Rubik's Cube; Professor's Cube; Group theory; Rubik's Revenge.}
\date{\today}

\begin{abstract}
We state and prove the ``first law of Cubology'', i.e. the solvability criterion, for the $n\times n\times n$ Rubik's Cube.\end{abstract}

\maketitle




\section{Introduction}

Erno Rubik invented in 1974 the most famous and appreciated puzzle game of all times that still goes under his name: the Rubik's Cube. In 1981, the Rubik's Revenge appeared on the market, meant to be a more difficult puzzle with respect to the original. In a sort of race to make the challenge harder and harder, a few years later the Professor's Cube came to life, sharing some features with both the Rubik's Cube and the Rubik's Revenge.

Over the years the Rubik's Cube attracted the attention of many mathematicians (see for example \cite{Bande82}, \cite{Joyner08}, \cite{Kosniowski81}, \cite{miller2012})  who gave a group theoretical analysis and solution to the puzzle. The enigma fascinated also computer scientists, whose interests are mainly focused in establishing the God's number, namely the least number of moves needed to solve the Cube from any arbitrary position, see for example \cite{Rokicki13}, \cite{OnGods}, \cite{Rokicki14}, \cite{Kunkle07}, \cite{Demaine2011}. Strategies strictly connected to the Rubik's Cube have been recently applied also in cryptography \cite{Volte2013}, \cite{Diaconu13}, while some physicists were inspired by the Cube as a model for complex systems of different kinds \cite{Czech2011}, \cite{Lee}.

Any \emph{Cubemaster} knows that dismantling the cube and reassembling it randomly may cause, in most of the cases, that the puzzle is not solvable anymore. Therefore a question arises naturally: under which conditions is a (scrambled) cube solvable? The answer is well known for the Rubik's Cube and can be found in literature. In 1982, Bandelow \cite{Bande82} answered this question with ``the first law of cubology'', which furnishes necessary and sufficient conditions for the solvability of the Rubik's Cube. This suggests how important the question appears to mathematicians. In a recent work \cite{Loi}, we provided an answer to the same question for the Rubik's Revenge; later soon, we realized that our approach can be fruitfully raised to a more general level. The aim of the present work is to provide the most general result for a Rubik's Cube of dimension $ n $, meaning a Rubik's Cube provided with $ n $ rotating slices. The result is addressed by splitting the disjoint cases for $ n $ being an even or an odd number (see Theorem \ref{th: first law per il cubo pari} and Theorem \ref{th: first law per il cubo dispari}).

The paper is structured as follows: in Section \ref{sec: 5x5}, we address a detailed mathematical description of the Professor's Cube and state the `first law of cubology' for it (see Theorem \ref{the: condizioni5x5}). In order to prove this result we provide the algebraic analysis of the group of the cube in Subsection \ref{subsec: studio di G5}.
The $n\times n \times n$ Cube is introduced in Section \ref{sec: nxn}: this section is divided into four subsections. In Subsections \ref{subsec: n pari} and \ref{subsec: n dispari} we state the main theorem for the case where $n$ is even and odd, respectively. 
In Subsection \ref{subsec: 6x6} we address the algebraic study for the case $n=6$ then used in Subsection \ref{subsec: proof caso n=6} to give a proof of the main statement for $n$ even.

We opt, on purpose, not to give the explicit proof of the first law of cubology for the $n\times n\times n $ Cube, as we believe it makes the reading unnecessarily more difficult. On the contrary, we described in full details the examplar cases for $n=5$ (Section \ref{sec: 5x5}) and $n=6$ (see Subsection \ref{subsec: 6x6}), illustrating how the proof technique can be extended to arbitrary $n$.

\section{Configurations of the Professor's Cube}\label{sec: 5x5}

The Professor's Cube is an extension of the Rubik's Cube and the Rubik's Revenge. It is made of five rotating slices, from which it follows that the Professor's Cube is composed by 98 cubies: 8 corner cubies (possessing 3 stickers each), 36 edge cubies (2 stickers) and 54 remaining center cubies (one sticker only).
At first glance the Professor's Cube turns out to share a remarkable feature with the Rubik's Cube: one cubie in each face, namely the most central one, is \emph{fixed}. This represents a big difference with respect to the Rubik's Revenge and any cube with an even number of slices, where each center cubie can be moved. 
Furthermore, the number of edge cubies is exactly the sum of the 24 edges (twelve pairs, in particular) of the Rubik's Revenge and the 12 edges of the Rubik's Cube: we will refer to the formers as \textit{coupled edges} (indicated by black spots in Figure \ref{fig: edge singoli e in coppia}), while to the latters as \textit{single edges} (red spots in Figure \ref{fig: edge singoli e in coppia}). 

 \begin{center}
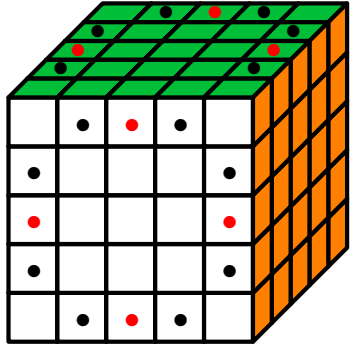
\begin{figure}[h]
\RubikCubeSolved
\begin{minipage}{6cm}
\centering
\begin{tikzpicture}[scale=0.65]
\DrawNCubeAll{5}{O}{G}{W}

\node at (-0.4, 2.5) {$\bullet$};
\node at (0.6, 2.5) {\textcolor{red}{$\bullet$}};
\node at (1.6, 2.5) {$\bullet$};
\node at (-0.4, -1.5) {$\bullet$};
\node at (0.6, -1.5) {\textcolor{red}{$\bullet$}};
\node at (1.6, -1.5) {$\bullet$};
\node at (-0.855, 3.65) {$\bullet$};
\node at (-0.5, 4.02) {\textcolor{red}{$\bullet$}};
\node at (-0.1, 4.42) {$\bullet$};

\node at (-1.4, 1.5) {$\bullet$};
\node at (-1.4, 0.5) {\textcolor{red}{$\bullet$}};
\node at (-1.4, -0.5) {$\bullet$};
\node at (1.3, 4.8) {$\bullet$};
\node at (2.3,4.8) {\textcolor{red}{$\bullet$}};
\node at (3.3, 4.8) {$\bullet$};

\node at (3.1, 3.65) {$\bullet$};
\node at (3.5, 4.02) {\textcolor{red}{$\bullet$}};
\node at (3.9, 4.42) {$\bullet$};

\node at (2.6, 1.5) {$\bullet$};
\node at (2.6, 0.5) {\textcolor{red}{$\bullet$}};
\node at (2.6, -0.5) {$\bullet$};

\end{tikzpicture}
\end{minipage}
\caption{Black and red dots indicating some coupled and singles edges, respectively.}\label{fig: edge singoli e in coppia}
\end{figure}
\end{center}

Central cubies are 9 on each face: 1 fixed and 8 moving ones. We can split the 8 moving ones into two classes: \textit{center corners}, namely the ones standing on the diagonals of a fixed center piece (blue spots in Figure \ref{fig: central corners and edges}), and \textit{center edges}, standing on the side of the fixed center piece (indicated by red spots in Figure \ref{fig: central corners and edges}). 
\begin{center}
\begin{figure}[h]
\RubikCubeSolved
\begin{minipage}{6cm}
\centering
\begin{tikzpicture}[scale=0.65]
\DrawNCubeAll{5}{O}{G}{W}

\node at (0.6,0.5) {$\bullet$};
\node at (-0.4, -0.5) {\textcolor{blue}{$\bullet$}};
\node at (0.6, -0.5) {\textcolor{red}{$\bullet$}};
\node at (1.6, -0.5) {\textcolor{blue}{$\bullet$}};
\node at (-0.4, 1.5) {\textcolor{blue}{$\bullet$}};
\node at (0.6, 1.5) {\textcolor{red}{$\bullet$}};
\node at (1.6, 1.5) {\textcolor{blue}{$\bullet$}};

\node at (1.6, 0.5) {\textcolor{red}{$\bullet$}};
\node at (-0.4, 0.5) {\textcolor{red}{$\bullet$}};

\node at (0.1, 3.65) {\textcolor{blue}{$\bullet$}};
\node at (0.5, 4.02) {\textcolor{red}{$\bullet$}};
\node at (0.9, 4.42) {\textcolor{blue}{$\bullet$}};

\node at (2.1, 3.65) {\textcolor{blue}{$\bullet$}};
\node at (2.5, 4.02) {\textcolor{red}{$\bullet$}};
\node at (2.9, 4.42) {\textcolor{blue}{$\bullet$}};

\node at (1.5, 4.02) {$\bullet$};

\node at (1.1, 3.65) {\textcolor{red}{$\bullet$}};
\node at (1.9, 4.42) {\textcolor{red}{$\bullet$}};
\end{tikzpicture}
\end{minipage}
\caption{Black spots indicating the fixed center cubies in the up and front face, red spots center edges and blue center corners.}\label{fig: central corners and edges}
\end{figure}
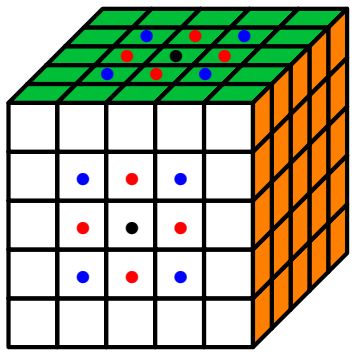
\end{center}

The colour that every face shall assume once the cube is solved is determined by the colour of the fixed center cubie living in each face. As a matter of convention, throughout the paper we mean the Professor's Cube oriented so to have the green face on top and the white one in front.

The set of moves of the Professor's cube naturally inherits the structure of a group, which we denote by $ \mathbf{M} $. The group is generated by the twelve clockwise rotations of slices denoted by capital letters $ R,L,F,B,U,D, C_{R}, C_{F}, C_{U}, C_{L}, C_{B}, C_{D} $, where $ R,L,F,B,U,D $ stand for twists of the external slices (right, left, front, back, up and down face respectively), while $ C_{F}, C_{R}, C_{U}, C_{L}, C_{B}, C_{D}  $ denote the twists of the central-front, central-right, central-up, central-left, central-back and central-down slice respectively (red arrows in Figure \ref{fig: alcune mosse del 5x5} indicates $C_L$ and $C_R$).
\begin{center}
\begin{figure}[ht]
\RubikCubeSolved
\begin{minipage}{6cm}
\centering
\begin{tikzpicture}[scale=0.65]
\DrawNCubeAll{5}{O}{G}{W}
\draw[line width= 2pt, red]  (1.6, 0.7) -- (1.6, 3.05);
\draw[line width= 2pt, ->, red] (1.6,3) -- (2.5,4);
\node at (1.6, -0.5) {\textcolor{red}{$\mathbf{C_R}$}};
\node at (2.6, -0.5) {\textcolor{blue}{$\mathbf{R}$}};

\draw[line width= 2pt, blue]  (2.6, 0.7) -- (2.6, 3.05);
\draw[line width= 2pt, ->, blue] (2.6,3) -- (3.5,4);

\draw[line width= 2pt, <-, blue]  (-1.4, 0.5) -- (-1.4, 3.05);
\draw[line width= 2pt, blue] (-1.4,3) -- (-0.5,4);

\draw[line width= 2pt, <-, red]  (-0.4, 0.5) -- (-0.4, 3.05);
\draw[line width= 2pt, red] (-0.4,3) -- (0.5,4);

\node at (-0.4, -0.5) {\textcolor{red}{$\mathbf{C_L}$}};
\node at (-1.4, -0.5) {\textcolor{blue}{$\mathbf{L}$}};

\end{tikzpicture}
\end{minipage}
\caption{Blue arrows indicate the rotation of the left and right face, i.e. the action of the moves $L$ and $R$; red indicated the rotation of the internal slices corresponding to $C_L$ and $C_R$.}\label{fig: alcune mosse del 5x5}
\end{figure}
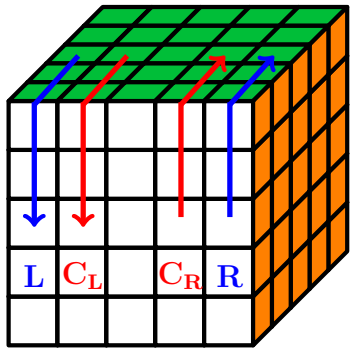
\end{center}


The number of stickers making up the Cube is equal to 150, hence we may define the group homomorphism
$$ \varphi :\mathbf{M}\longrightarrow \mathbf{S}_{150}, $$ 
\noindent
sending a move $ m\in\mathbf{M} $ to a permutation $ \varphi(m)\in\mathbf{S}_{150} $, which corresponds to the permutation in $ \mathbf{S}_{150} $ \textit{induced} by $ m $. 
\begin{remark}\label{inclusione stretta}\rm
\textrm{The inclusion $ \varphi (\mathbf{M}) \subset \mathbf{S}_{150}$ is (obviously) strict. For example, it may never happen that an element of 
$\mathbf{M}$ would send a corner to the position occupied by a center or by an edge cubie.}    
\end{remark} 

The group of the Professor's Cube is defined as $ \mathbf{G}_5 :=\varphi(\mathbf{M}) $. In other terms, it is the quotient group $ \mathbf{M}/ker(\varphi) $, i.e. it is formed by the moves we obtain by identifying all the combination of moves leading to the identical permutation.

Consider now the subset of $\mathbf{S}_{150}$ corresponding to permutations and/or orientation changes of corners, edges and center cubies. The set all of these permutations will be called
the \textit{space of configurations of the Professor's Cube} and will be denoted by $\mathcal{S}_{Conf}$\footnote{In this definition we suppose that we cannot switch for instance a single edge with a coupled edge or a center edge with a center corner.}.

\begin{remark}\label{re:no flip}
A configuration is nothing but a \emph {randomly assembled} Professor's Cube where we also allow edge flips. It is known that in the Rubik's Revenge a single edge cubie can not be flipped\footnote{This statement is proved in \cite{Larsen85}.}. In the Professor's Cube an analogous statement holds for coupled edges only and not for every edge in general (single edges actually flips!). However we may theoretically think to flip a single edge (and hence changing its orientation only) by swapping its stickers.
 Therefore, unlike the Rubik's Cube, the cardinality of $\mathcal{S}_{Conf}$ is larger than the number of patterns one may get by dismantling and reassembling the cube.  

\end{remark}

Clearly $ \mathbf{G}_5 \subset \mathcal{S}_{Conf}\subset \mathbf{S}_{150}$ and $ |\mathcal{S}_{Conf}| < 150! $. More precisely 
\begin{equation}\label{cardconf}
|\mathcal{S}_{Conf}|= (24!)^3\cdot 2^{36}\cdot 12! \cdot 3^{8}\cdot 8!
\end{equation}
We let the group $ \mathbf{G}_5 $ act on the left on $ \mathcal{S}_{Conf} $:
$$ \mathbf{G}_5 \times\mathcal{S}_{Conf}\longrightarrow\mathcal{S}_{Conf} $$
$$ (g,s)\longmapsto g\cdot s $$
where ``$ \cdot $'' stands for the composition in $ \mathbf{S}_{150} $. This gives raise to a left action of $ \mathbf{M} $ on the space of configurations, by $ m\cdot s=g\cdot s $, where $ g=\varphi(m) $, and viceversa. For this reason, from now on, we will not make any distinction between the two actions on $ \mathcal{S}_{Conf} $. Notice that  the action of $\mathbf{G}_5 $ on $ \mathcal{S}_{Conf} $ is \textit{free} (in contrast with that of $\mathbf{M}$), i.e.  if $g\cdot s = s$ then $g=id $. Hence this action yields a bijection between the group $ \mathbf{G}_5 $ and the orbit $ \mathbf{G}_5 \cdot s=\{g\cdot s\ | \ g\in\mathbf{G}_5 \} $ of an arbitrary $s\in\mathcal{S}_{Conf} $, obtained by sending $g\in\mathbf{G}_5 $ into $g\cdot s\in\mathcal{S}_{Conf} $.

\begin{remark}
It is easy to see that the space of configurations $\mathcal{S}_{Conf} $ is a subgroup of $\mathbf{S}_{150}$ containing $\mathbf{G}_5$ as a subgroup. Then the left action $g\cdot s$, $g\in \mathbf{G}_5$ and $s\in \mathcal{S}_{Conf}$, can be also seen as the multiplication in $\mathcal{S}_{Conf}$ and 
the orbit  $\mathbf{G}_5\cdot s$ of $s\in \mathcal{S}_{Conf}$ is nothing but the right coset of $\mathbf{G}_5$ in $\mathcal{S}_{Conf}$ with respect to $s$.
\end{remark}
Some pieces in the Professor's Cube, namely corners, single edges and (fixed) centers, are univocally identified by the colour of their stickers. On the other hand, ambiguity may arise concerning coupled edges, center corners and center edges. For this reason and in order to characterize mathematically the notion of \textit{configuration}, all center edges, center corners and coupled edges shall be labelled: a number between 1 and 24 may work for center corners as well as center edges\footnote{Notice that a center edge can never assume the position of a center corner and viceversa.}. Once all center cubies have been marked, the position of any of them, in a random pattern, can be described by a permutation, $ \rho_c\in S_{24}$ for center corners and $ \rho_e\in S_{24} $ for center edges.

The description of corners works exactly like for the Rubik's Cube\footnote{A detailed description can be found for instance in \cite{Chen} or in \cite{Bande82}.}: a permutation $ \sigma\in S_{8} $ describes their positions and vectors $ x\in(\mathbb{Z}_{3})^{8}$ do the same for orientations.   

We may think of the single edges as the edge pieces of the Rubik's Cube, hence their position is described by a permutation $ \tau\in S_{12} $ and the orientation by a vector $ z\in (\mathbb{Z}_{2})^{12} $. On the other hand, the twenty-four coupled edges can be divided in twelve pairs, namely those ones of the same colour. The two members of a pair are labelled with different letters: \textit{a} and \textit{b}, respectively. This is enough to provide a description of edges' positions by using a permutation $ \tau_{_1}\in S_{24} $. We refer to an edge labelled with \textit{a} (respectively \textit{b}) as an edge of \textit{type a} (respectively \textit{type b}). Obviously the type of a piece depends on its label and not on the position it is lying in. 
 
 \begin{center}

\RubikCubeSolved
\begin{minipage}{6cm}
\centering
\begin{tikzpicture}[scale=0.8]
\DrawNCubeAll{5}{O}{G}{W}

\node at (-0.5, 2.55)
{\LARGE{\textbf{b}}};
\node at (1.5, 2.47)
{\LARGE{\textbf{a}}};

\node at (-0.5, -1.5)
{\LARGE{\textbf{a}}};
\node at (1.5, -1.45)
{\LARGE{\textbf{b}}};

\node at (-1, 4.6)
{\LARGE{\textbf{b}}};
\node at (-1.7, 3.9)
{\LARGE{\textbf{a}}};

\node at (3.4, 5.43)
{\LARGE{\textbf{b}}};
\node at (1.5, 5.35)
{\LARGE{\textbf{a}}};

\node at (4.45, 3.9)
{\LARGE{\textbf{a}}};
\node at (3.6, 3.1)
{\LARGE{\textbf{b}}};

\node at (4.41, -0.1)
{\LARGE{\textbf{b}}};
\node at (3.68, -0.9)
{\LARGE{\textbf{a}}};

\node at (2.5, 1.5)
{\LARGE{\textbf{b}}};
\node at (2.55, -0.5)
{\LARGE{\textbf{a}}};

\node at (-1.5, 1.5)
{\LARGE{\textbf{a}}};
\node at (-1.5, -0.5)
{\LARGE{\textbf{b}}};

\end{tikzpicture}
\end{minipage}

\end{center}
 
\vspace{15pt}
For describing orientations of coupled edges, we proceed as we did for the Rubik's Revenge \cite{Loi}, using only twelve numbers (instead of 24) and labels \textit{a} and \textit{b}.  \\

In virtue of the labelling system introduced above, we describe the orientations of coupled edges by two 12-tuples $ y_a=(y_{1_a},y_{2_a},...,y_{{12}_a}) $, with $ y_{i_a}\in\mathbb{Z}_{2} $ and $ y_b=(y_{1_b},y_{2_b},...,y_{{12}_b}) $, with $ y_{i_b}\in\mathbb{Z}_{2} $ (edges are twenty-four, divided in pairs a and b).\\
It follows that the space of configurations $\mathcal{S}_{Conf}$  is in bijection with  the set of $8$-tuples of the form $ (\sigma , \tau , \tau_{_1}, \rho_c , \rho_e, x , y, z)$, where $ \sigma \in S_{8} $, $ \tau\in S_{12} $ , $ \tau_{_1}\in S_{24} $, $ \rho_c\in S_{24} $, $ \rho_e\in S_{24} $ while $ x\in (\mathbb{Z}_{3})^{8} $, $ y\in(\mathbb{Z}_{2})^{24} $ and $ z\in(\mathbb{Z}_{2})^{12} $.  
From now on we identify $\mathcal{S}_{Conf}$ with such $8$-tuples. 
\vspace{5pt}

The $8$-tuple $(id_{S_{8}},id_{S_{12}}, id_{S_{24}},id_{S_{24}}, id_{S_{24}}, 0 ,0, 0)$
will be called the \textit{initial configuration}.

\begin{definition}\label{conf Revenge valida}
A configuration of the Professor's Cube is \emph{valid} when it is in the orbit of the initial configuration under the action of $ \mathbf{G}_5 $.
\end{definition}

Before stating the main result of this section, it seems appropriate to make some considerations concerning coupled edges. As in the Rubik's Revenge (see \cite{Loi} for details), an edge of type \textit{a} (resp. \textit{b}) can occupy, in a random configuration, either an a-position or a b-position. It follows that, by using the information encoded in $ \tau_{_1}\in S_{24} $, we may associate to any edge a number $ i_{t,s} $, with $ t,s\in\{a,b\} $, where $ i_t $ indicates the spatial position, while $ s $ refers to the type of the edge. There always are orientation numbers associated to any edge $ i_{t,s} $ which will be $ y_{i_{t,s}}:= y_{i_t} $. 
  
We can now give the conditions for a configuration to be valid: this is actually the ``first law of cubology'' for the Professor's Cube\footnote{The present result refines its first, uncorrect, formulation in \cite{Bonziothesis}.}.
\begin{theorem}\label{the: condizioni5x5}
A configuration $ (\sigma , \tau , \tau_{_1}, \rho_c , \rho_e, x , y, z)$ of the Professor's Cube is valid if and only if 
\begin{enumerate}
\item[\emph{1.}] $sgn(\sigma)=sgn(\tau) =sgn(\rho_c) $
 \item[\emph{2.}] $sgn(\tau_1)=sgn(\sigma)sgn(\rho_e) $
\item[\emph{3.}] $ \sum_{i}x_i\equiv 0 $ \emph{mod 3} 
\item[\emph{4.}] $ \sum_{i}z_{i}\equiv 0 $ \emph{mod 2} 
\item[\emph{5.}] $ y_{i_{t,s}}=1-\delta_{t,s} $, $ i=1,...,12$,
\end{enumerate}
where  $\delta_{a,a}=\delta_{b,b}=1$ and  $\delta_{a,b}=\delta_{b,a}=0$.
\end{theorem}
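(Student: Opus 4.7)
The plan is the standard bidirectional argument. For necessity, I would show that each of the twelve generators of $\mathbf{M}$ preserves conditions 1--5 individually, so that by induction on word length the orbit of the initial configuration (which satisfies the conditions tautologically) consists entirely of configurations satisfying 1--5. For sufficiency, I would exhibit enough explicit move sequences to reduce any configuration satisfying 1--5 to the initial one.

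The necessity check splits into face twists and internal slice twists. A quarter turn of an external face such as $R$ cyclically permutes 4 corners, 4 single edges, two disjoint 4-tuples of coupled edges, 4 center corners and 4 center edges, producing signs $(-1,-1,+1,-1,-1)$ on $(\sigma,\tau,\tau_1,\rho_c,\rho_e)$; this is consistent both with $sgn(\sigma)=sgn(\tau)=sgn(\rho_c)$ and with $sgn(\tau_1)=sgn(\sigma)\,sgn(\rho_e)$. A quarter turn of an internal slice such as $C_R$ fixes corners and single edges, produces one 4-cycle of coupled edges (sign $-1$), two disjoint 4-cycles of center corners (sign $+1$), and one 4-cycle of center edges (sign $-1$); again conditions 1 and 2 hold. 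Condition 3 is inherited from the classical $3\times 3\times 3$ corner-twist analysis applied to the six face twists, since internal slices do not touch corners. Condition 4 is likewise inherited from the $3\times 3\times 3$ analysis of single edges. Condition 5 is the coupled-edge invariant carried over from the $4\times 4\times 4$ companion analysis in \cite{Loi}: the generators permute $a$-slots with $a$-slots and $b$-slots with $b$-slots when no flip is introduced, and interchange the labels when a flip is introduced.

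For sufficiency my plan is a layered reduction. Stage (i): solve the centers by first reducing the 24 center corners via 3-cycle commutators of the form $[C_X, Y]$ and their conjugates, then reducing the 24 center edges similarly, using the fact that the alternating groups on the two 24-element position sets, together with the sign conditions $sgn(\rho_c)=sgn(\sigma)$ and $sgn(\rho_e)=sgn(\sigma)\,sgn(\tau_1)$ respectively, suffice to reach the identity permutation. Stage (ii): pair the $a$- and $b$-type coupled edges on each of the 12 Cube edges, by 3-cycles of pairs produced by commutators that do not disturb the already-solved centers. Stage (iii): treat the remaining structure as a Rubik's Cube and invoke the classical first law of cubology, whose hypothesis is guaranteed by conditions 1, 3 and 4. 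Condition 5 ensures that any residual orientation of coupled edges is already correct once the pairing of stage (ii) is in place.

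The principal obstacle is the construction in stage (ii) of commutators that permute coupled edges without disturbing the solved centers and without inadvertently introducing parity defects elsewhere; the signature condition 2 is precisely the statement that such defects can always be compensated by an appropriate center-edge swap. Once these building blocks are catalogued, one checks that every configuration satisfying 1--5 reduces to the initial configuration, completing the sufficiency proof.
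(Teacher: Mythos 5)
Your overall strategy coincides with the paper's: necessity is checked generator by generator (your cycle counts for $R$ and for $C_R$ agree with those the paper uses), and sufficiency is obtained by a staged reduction driven by commutator-generated alternating subgroups together with the parity conditions, exactly as in Subsection \ref{subsec: studio di G5}. Your reordering of the stages (centers before corners) is harmless, since at each stage the relevant sign can be normalized to $+1$ by a single generator ($R$ for condition 1, $C_R$ for condition 2) before the alternating subgroup is invoked.

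There is, however, one step that fails as written. In stage (iii) you propose to ``treat the remaining structure as a Rubik's Cube and invoke the classical first law of cubology.'' The classical $3\times3\times3$ solution is a word in the face turns $R,L,U,D,F,B$, and on the Professor's Cube every such face turn permutes the eight \emph{labelled} moving centers of its face (a $4$-cycle on center corners and a $4$-cycle on center edges) as well as the coupled edges; executing that word would therefore destroy the configuration you solved in stages (i) and (ii). What is actually needed --- and what the paper proves and uses --- is that the subgroups $\mathbf{C}$ and $\mathbf{E}$ of moves acting \emph{only} on corners, respectively only on single edges (identity on every other piece), satisfy $\mathbf{C}\cong\mathcal{A}_8$ and $\mathbf{E}\cong\mathcal{A}_{12}$ (Theorems \ref{C=A8} and \ref{E=A12}), together with the analogous orientation-only statements; these are the building blocks that let the last stage proceed without undoing the earlier ones. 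A smaller imprecision of the same kind: a ``center-edge swap'' is an odd permutation and so does not lie in $\mathbf{Z_e}\cong\mathcal{A}_{24}$; the parity defect $sgn(\tau_1)=sgn(\rho_e)=-1$ cannot be repaired inside $\mathbf{E_c}\times\mathbf{Z_e}$, but must be killed by a generator such as $C_R$, which is simultaneously odd on coupled edges and on center edges (and even on everything else), after which the two alternating subgroups finish the job. With these two points repaired, your argument becomes the paper's proof.
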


Next section is devoted to the proof of this theorem. Our proof will not be constructive, meaning that we do not show the moves actually needed to solve the cube, but we will prove and consequently use some group-theoretical results.

As a corollary we get the order of $ \mathbf{G}_5 $.
\begin{corollary}\label{ordine di G}
The order of $ \mathbf{G}_5 $ is $ (24!)^{3}\cdot\; 2^{7} \cdot\; 12! \cdot\;8!\;\cdot\;3^{7} $.
\proof
We already know that the action of $ \mathbf{G}_5 $ on $ \mathcal{S}_{Conf.} $ is free. Therefore $ |\mathbf{G}_5|=|\mathbf{G}_5\cdot s| $ for all $ s\in\mathcal{S}_{Conf.} $. It follows that $ |\mathbf{G}_5|=\frac{|\mathcal{S}_{Conf.}|}{N} $, where $ N$ is the number of orbits. Theorem \ref{the: condizioni5x5} yields $N= 2^{3}\cdot 2 \cdot 2\cdot 3\cdot 2^{24}$  and the results follows by (\ref{cardconf}).
\endproof
\end{corollary}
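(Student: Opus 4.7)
The plan is to combine three ingredients: the fact (noted in the preceding Remark) that $\mathcal{S}_{Conf}$ is itself a group with $\mathbf{G}_5$ as a subgroup; the characterization of the coset of $\mathbf{G}_5$ containing the initial configuration provided by Theorem \ref{the: condizioni5x5}; and the explicit formula (\ref{cardconf}) for $|\mathcal{S}_{Conf}|$.

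First, I would observe that by the preceding Remark the $\mathbf{G}_5$-orbits on $\mathcal{S}_{Conf}$ coincide with the right cosets of $\mathbf{G}_5$ in $\mathcal{S}_{Conf}$, so Lagrange's theorem (equivalently, the freeness of the action) yields
\[
|\mathbf{G}_5|\cdot N \;=\; |\mathcal{S}_{Conf}|,
\]
where $N=[\mathcal{S}_{Conf}:\mathbf{G}_5]$ is the number of cosets. Since (\ref{cardconf}) already supplies $|\mathcal{S}_{Conf}|$, the entire task reduces to computing $N$.

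Second, I would exploit Theorem \ref{the: condizioni5x5} to count $N$ directly. The theorem describes $\mathbf{G}_5$ (viewed as a coset of itself inside $\mathcal{S}_{Conf}$) as the set of tuples satisfying (1)--(5), so $N$ equals $|\mathcal{S}_{Conf}|$ divided by the number of valid tuples, and equivalently it is the product of the reduction factors imposed by each of the five conditions. Because these conditions act on essentially disjoint coordinate blocks, the reduction factors multiply independently: the sign relations in (1)--(2) contribute a power of $2$ (one factor per independent sign equality among $sgn(\sigma)$, $sgn(\tau)$, $sgn(\tau_1)$, $sgn(\rho_c)$, $sgn(\rho_e)$); condition (3) contributes a factor of $3$ (one value of $\sum_i x_i$ out of three in $\mathbb{Z}_3$); condition (4) contributes a factor of $2$ (one value of $\sum_i z_i$ out of two in $\mathbb{Z}_2$); and condition (5) contributes $2^{24}$, since for each fixed $\tau_1$ the rule $y_{i_{t,s}} = 1-\delta_{t,s}$ pins down every one of the $24$ orientation bits of $y$. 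Multiplying these quantities gives $N$; substitution into $|\mathbf{G}_5|=|\mathcal{S}_{Conf}|/N$ then yields the stated order.

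The only genuinely delicate step is the bookkeeping in condition (5): one must interpret the indexing $i_{t,s}$ (with $i=1,\dots,12$ and $t,s\in\{a,b\}$) carefully to see that the condition constrains all $24$ orientation bits simultaneously, and that the type $s$ of the edge occupying position $i_t$ is encoded entirely in $\tau_1$, so that $y$ is a function of $\tau_1$ alone under condition (5). Once this is unpacked, and the independence of conditions (1)--(5) on the various coordinate blocks has been verified, the remaining arithmetic is routine.
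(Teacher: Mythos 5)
Your strategy is exactly the paper's: the action of $\mathbf{G}_5$ on $\mathcal{S}_{Conf}$ is free, so $|\mathbf{G}_5|=|\mathcal{S}_{Conf}|/N$ with $N$ the number of orbits, and $N$ is obtained as the product of the reduction factors imposed by the five conditions of Theorem \ref{the: condizioni5x5}. The gap is that your final step does not close. The factors you enumerate are $2^3$ from the three independent sign equalities in conditions (1)--(2), $3$ from condition (3), $2$ from condition (4), and $2^{24}$ from condition (5), whose product is $N=2^{28}\cdot 3$; substituting this into $|\mathcal{S}_{Conf}|/N$ using (\ref{cardconf}) gives $(24!)^3\cdot 2^{8}\cdot 12!\cdot 3^7\cdot 8!$, i.e.\ exponent $2^8$ rather than the stated $2^7$. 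The paper's own proof takes $N=2^{3}\cdot 2\cdot 2\cdot 3\cdot 2^{24}=2^{29}\cdot 3$, which is what the stated order requires and which contains one more factor of $2$ than your list accounts for. So you must either locate a further independent constraint worth a factor of $2$ beyond the five reduction factors you identify, or acknowledge that your bookkeeping contradicts the claimed exponent. Writing that the substitution ``yields the stated order'' without actually performing it conceals precisely this discrepancy, which is the only nontrivial content of the corollary once the free-action reduction is in place; everything else in your argument (the coset interpretation, the independence of the coordinate blocks, and the observation that condition (5) determines all $24$ bits of $y$ from $\tau_1$) matches the paper and is fine.
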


%

\vskip 0.3cm

In order to study the solvability of the Professor's Cube we give the following:

\begin{corollary}\label{Sticker}
The probability that a randomly assembled Professor's cube is solvable is \begin{large} $\frac{1}{2^{12}\cdot 12}$ \end{large}.
\proof
In a randomly assembled Professor's Cube central pieces (both edges and corners) are not labelled, hence condition 1 and condition 2 reduce simply to $ sgn(\sigma)=sgn(\tau) $. The 24 equations in condition 4 are  reduced to 12: this is can be obtained by assigning a label $a$ or $b$ to each edge in a pair, depending on its orientation, in such a way that $y_{i_{t,s}}=1-\delta_{t,s}$.
\endproof
\end{corollary}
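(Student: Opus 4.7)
The plan is to apply Theorem~\ref{the: condizioni5x5} to a \emph{random} physical assembly---in which pieces sharing the same colour pattern are indistinguishable---and to see how its five conditions collapse under this indistinguishability.

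First, Conditions~1 and~2 should reduce to the single parity equation $sgn(\sigma)=sgn(\tau)$. Since the four center corners of each face share the same colour, swapping two of them within a colour class is invisible physically but flips the parity of $\rho_c$; hence $sgn(\rho_c)$ can always be chosen equal to $sgn(\sigma)$ by an appropriate labelling. The analogous manoeuvre on center edges then tunes $sgn(\rho_e)$ so that Condition~2 is automatic, leaving only $sgn(\sigma)=sgn(\tau)$ as a genuine constraint.

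Second, Condition~5 should reduce from 24 to 12 independent binary equations. Within each of the 12 colour-pairs of coupled edges the two members are indistinguishable, so swapping their labels $a\leftrightarrow b$ is a physical symmetry; such a swap flips the value $s$ appearing in Condition~5 at exactly the two positions currently occupied by that pair. The 12 generators of this relabelling $(\mathbb{Z}/2)^{12}$ act freely on the $24$-bit constraint vector coming from Condition~5, so each random assembly retains only $24-12=12$ independent binary invariants. Conditions~3 and~4 are unaffected, since they concern corners and single edges, whose colour patterns are all distinct.

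A random assembly is therefore solvable iff it satisfies one binary equation ($sgn(\sigma)=sgn(\tau)$), one ternary equation (Condition~3), one binary equation (Condition~4), and 12 further binary equations surviving from Condition~5, i.e.\ precisely one distinguished outcome among $2\cdot 3\cdot 2\cdot 2^{12}=12\cdot 2^{12}$. The main point to verify carefully is that these outcomes are equiprobable over random assemblies; this follows from the freeness of the $\mathbf{G}_5$-action on $\mathcal{S}_{Conf}$ (used in Corollary~\ref{ordine di G}) together with the fact, established in Subsection~\ref{subsec: studio di G5}, that the $\mathbf{G}_5$-orbits in $\mathcal{S}_{Conf}$ are precisely the level sets of the invariants surviving the relabelling. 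Granted this equidistribution, the asserted probability $\tfrac{1}{2^{12}\cdot 12}$ is immediate.
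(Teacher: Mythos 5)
Your proposal is correct and follows essentially the same route as the paper: conditions 1--2 collapse to $sgn(\sigma)=sgn(\tau)$ by relabelling the indistinguishable center pieces, condition 5 drops from $24$ to $12$ binary constraints via the $a\leftrightarrow b$ relabelling within each colour pair, and conditions 3 and 4 contribute factors $\tfrac{1}{3}$ and $\tfrac{1}{2}$, giving $\tfrac{1}{2\cdot 3\cdot 2\cdot 2^{12}}=\tfrac{1}{12\cdot 2^{12}}$. The paper's proof is much terser (and its reference to ``condition 4'' for the 24-to-12 reduction is a slip for condition 5), but the underlying argument is the same; your extra remarks on equidistribution only make explicit what the paper leaves implicit.
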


In \cite{Loi} we focused on the Rubik's Revenge and we observed that the Revenge sold on the market is different from the Revenge studied in the paper. The same holds for the Professor's Cube: the mathematical description acutally leads us to an object slightly different from the real one. 
In fact, as the most relevant feature of the Revenge sold on the market is that any member of a pair of edges  of the same colours is different from its companion, the same statement holds in the Professor's Cube for coupled edges. This fact has the physical effect that it is impossible to assemble the cube putting an edge of type $a$ (respectively type $b$) in a `$b$-position' (resplectively $a$-position), without changing the orientation of both edges in a pair.  This yields that condition 5 in Theorem \ref{the: condizioni5x5} can be always achieved, due to the internal mechanism of the Professor's Cube.  Thus, surprisingly enough,  we get:
\begin{corollary}\label{Monkey}
The probability that a randomly assembled Professor's Cube sold on the market  is solvable is \begin{large} $\frac{1}{12}$ \end{large}.
\end{corollary}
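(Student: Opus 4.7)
The plan is to revisit the five solvability conditions of Theorem~\ref{the: condizioni5x5} in the physical setting, identify which ones can still obstruct solvability when the commercial cube is reassembled, and then repeat the counting that yielded Corollary~\ref{Sticker}. Recall that in the abstract setting the denominator $2^{12}\cdot 12$ factored as $2^{12}\cdot(2\cdot 3\cdot 2)$, where the $2\cdot 3\cdot 2$ records, respectively, the single parity constraint $sgn(\sigma)=sgn(\tau)$ arising from conditions~1--2, the corner orientation constraint of condition~3, and the single-edge flip parity of condition~4, while the $2^{12}$ records the $12$ independent constraints into which condition~5 collapses after one is free to assign, within each coupled pair, the labels $a$ and $b$ to its two members.

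The pivotal step of the plan is to translate the physical distinctness of the two members of each coupled pair into the language of Theorem~\ref{the: condizioni5x5}. In the market cube each physical edge carries, by virtue of its internal mechanism, a canonical label $a$ or $b$, and the mechanism prevents one from interchanging a type-$a$ piece with a type-$b$ piece within the same pair without simultaneously flipping both; hence in any legally reassembled market cube one automatically has $t=s$ for every coupled edge. Substituting this in condition~5 gives $y_{i_{t,s}}=1-\delta_{t,s}=0$, which is always satisfied. Thus condition~5 no longer contributes any obstruction to solvability.

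To conclude I would recount the orbits under these reduced constraints: conditions~1--4 continue to contribute the independent factors $2,3,2$, while condition~5 drops out entirely, yielding a solvability probability of exactly $\frac{1}{2\cdot 3\cdot 2}=\frac{1}{12}$. I expect the only genuine difficulty of the argument to lie in the middle step, namely in justifying rigorously that the qualifier ``without changing the orientation of both edges in a pair'' in the physical description does not surreptitiously reintroduce a $\mathbb{Z}_2$ factor that would contradict the cleanness of the final count; once this is verified, the conclusion follows directly from the analysis already carried out for Corollary~\ref{Sticker}.
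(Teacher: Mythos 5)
Your proposal follows essentially the same route as the paper: conditions 1--4 contribute the factors $2\cdot 3\cdot 2$ exactly as in Corollary~\ref{Sticker}, and the factor $2^{12}$ disappears because the internal mechanism makes condition~5 of Theorem~\ref{the: condizioni5x5} automatic, giving $\frac{1}{12}$. One correction to your pivotal middle step: the mechanism does \emph{not} force $t=s$ for every coupled edge --- a type-$a$ piece can perfectly well be assembled into a $b$-position --- rather, an edge cubie fits into a given slot in exactly one way, so its orientation is completely determined by its type and the type of the slot, and this determination is precisely $y_{i_{t,s}}=1-\delta_{t,s}$ in both the $t=s$ and $t\neq s$ cases. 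This is also what answers the worry you raise at the end: there is no residual $\mathbb{Z}_2$ freedom because the orientation is never an independent degree of freedom on a physically assembled cube, so condition~5 holds identically and the count $\frac{1}{2\cdot 3\cdot 2}=\frac{1}{12}$ stands.
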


\subsection{On the subgroups of $\mathbf{G}_5 $}\label{subsec: studio di G5}

In this section we study of the structure of $ \mathbf{G}_5 $. In particular we aim at showing that some subgroups of $ \mathbf{G}_5 $, namely those ones that permute corners, single edges, and center corners are alternating, while those acting on coupled edges and center edges are symmetric groups. This allows an elegant, although non constructive, proof of Theorem \ref{the: condizioni5x5}. 

The significant subgroups of $ \mathbf{G}_5 $ we want to study will be denoted by
$\mathbf{C}$, which permutes corner cubies (no matter the action on orientation), and act as the identity on other pieces; $ \mathbf{E} $, permuting single edges only (and acting as identity on every other piece); $ \mathbf{E_{_c}} $, acting on coupled edges only; $ \mathbf{Z_{_c}} $ permuting center corners only and finally $ \mathbf{Z_{_e}} $ acting only on center edges. 

It is easy to notice that corners and single edges in the Professor's Cube act exactly as corners and edges of the Rubik's Cube. Indeed the corresponding subgroups permuting them only, $ \mathbf{C} $ and $ \mathbf{E} $ respectively, are exactly those of the Rubik's Cube. Since each corner may assume three different orientations, it is known \cite{Signm82}, \cite{Larsen85} that the subgroup of corners corresponds to the wreath product $ \mathbf{H} = \mathbf{S}_{8} \bigotimes_{Wr} \mathbb{Z}_{3} $. However, we aim at describing $\mathbf{C}$ which is obtained as the quotient group $ \mathbf{H}/\mathbf{T} $, where $ \mathbf{T} $ is the (normal) subgroup consisting of all possible twists. 

\begin{theorem}\label{C=A8}
$ \mathbf{C}\cong \mathcal{A}_{8} $, the alternating group of even permutations.  
\end{theorem}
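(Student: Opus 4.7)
The plan is a two-inclusion argument. The easy direction uses the parity constraints of the first law; the hard direction reduces, via normality and the simplicity of $\mathcal{A}_8$, to exhibiting a single explicit commutator.

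For $\mathbf{C} \subseteq \mathcal{A}_8$ I would argue directly from Theorem \ref{the: condizioni5x5}. Any $g \in \mathbf{C}$ fixes every non-corner cubie, so in the $8$-tuple representation of the configuration $g \cdot s_0$ (where $s_0$ is the initial configuration) one has $\tau = \mathrm{id}$ and $\rho_c = \mathrm{id}$. Condition (1) of Theorem \ref{the: condizioni5x5} then forces $\mathrm{sgn}(\sigma) = \mathrm{sgn}(\tau) = +1$, so the induced corner permutation is even.

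For the reverse inclusion I would argue abstractly. First, $\mathbf{C}$ is normal in $\mathbf{G}_5$: cube moves send corner positions to corner positions and non-corner positions to non-corner positions, so for any $g \in \mathbf{C}$ and $h \in \mathbf{G}_5$ the conjugate $hgh^{-1}$ still fixes every non-corner piece. Next, the projection $\pi : \mathbf{G}_5 \to \mathbf{S}_8$ recording the corner permutation is surjective: a basic face twist such as $R$ already induces a $4$-cycle on corners, and conjugating it by the other face twists produces enough $4$-cycles to generate $\mathbf{S}_8$. Hence $\pi(\mathbf{C})$ is a subgroup of $\mathcal{A}_8$ normalized by $\pi(\mathbf{G}_5) = \mathbf{S}_8$. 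Since $\mathcal{A}_8$ is simple, it admits no nontrivial proper normal subgroup, and \emph{a fortiori} no nontrivial proper $\mathbf{S}_8$-invariant subgroup, so $\pi(\mathbf{C}) \in \{\,\{1\},\,\mathcal{A}_8\,\}$.

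The main obstacle, and the one genuinely constructive step, is to rule out $\pi(\mathbf{C}) = \{1\}$ by producing a sequence of moves in $\mathbf{G}_5$ that $3$-cycles three corner cubies and leaves every single edge, coupled edge, center corner, and center edge fixed. This is achieved by a commutator $[A,B] = A B A^{-1} B^{-1}$ in which $A$ and $B$ are short combinations of outer face twists $R, L, \ldots$ and inner slice twists $C_R, C_L, \ldots$ chosen so that (i) the supports of $A$ and $B$ meet in exactly one corner position and (ii) the inner slice components of $A$ and $B$ are arranged so that all disturbances of center and coupled-edge cubies introduced by $A$ are exactly undone in $B A^{-1} B^{-1}$. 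Such a commutator is a mild variation of the classical corner three-cycle from the $3 \times 3$ Rubik's Cube, with the extra $C_{\star}$ generators used as setup moves to neutralize the additional pieces of the Professor's Cube. Once any one such commutator is exhibited, the normality-plus-simplicity argument above immediately upgrades it to the desired equality $\mathbf{C} \cong \mathcal{A}_8$.
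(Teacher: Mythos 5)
Your argument is essentially correct, but it takes a genuinely different route from the paper, which in fact offers no proof of Theorem \ref{C=A8} at all: it asserts that corners (and single edges) of the Professor's Cube behave exactly as those of the $3\times 3\times 3$ and refers to Frey--Singmaster for the result. Your proof is self-contained, and for the lower bound it replaces the device the paper uses for the analogous Theorems \ref{Zc=A24}--\ref{Ze=S24} (exhibit one $3$-cycle, then conjugate it to obtain \emph{every} $3$-cycle) by normality of $\mathbf{C}$ in $\mathbf{G}_5$ together with the simplicity of $\mathcal{A}_{8}$; this is cleaner, since it avoids having to justify that any three target corners can be carried to the three special positions. Two points deserve care. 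First, deducing $\mathbf{C}\le\mathcal{A}_{8}$ from Theorem \ref{the: condizioni5x5} risks circularity, because the paper invokes Theorem \ref{C=A8} inside the proof of the ``if'' direction of that theorem; you only need the ``only if'' direction, which is established independently by checking the generators, but it would be safer to argue directly: every generator induces permutations of equal parity on corners and on single edges (the moves $C_{\star}$ fix both), so an element fixing all single edges must be even on corners. Second, the one step you leave unexecuted --- the explicit pure corner $3$-cycle --- is genuinely needed, but it is classical and requires no inner-slice setup moves at all: a commutator of outer face turns such as $[R,\,U^{-1}L^{-1}U]$ already works verbatim on the Professor's Cube, because the supports of $R$ and $U^{-1}L^{-1}U$ meet in a single corner cubie, outer face turns never move a center sticker off its own face, and they carry each triple of edge cubies lying along one geometric edge as a block. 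With such a move written down, your proof is complete and arguably more careful than the paper's treatment, which glosses over the fact that the upper bound $\mathbf{C}\le\mathcal{A}_{8}$ must be re-verified in the presence of the extra generators $C_{\star}$.
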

\begin{theorem}\label{E=A12}
$ \mathbf{E}\cong \mathcal{A}_{12} $, the alternating group of even permutations.
\end{theorem}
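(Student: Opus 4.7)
The plan is to establish the two inclusions $\mathbf{E}\subseteq\mathcal{A}_{12}$ and $\mathbf{E}\supseteq\mathcal{A}_{12}$ separately. The first is a parity check on generators; the second exploits that $\mathbf{E}$ is a normal subgroup of $\mathbf{G}_5$ sitting inside the simple group $\mathcal{A}_{12}$, so that it suffices to exhibit any one non-trivial element of $\mathbf{E}$.

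For the upper bound I would inspect each generator. Every external face twist $R,L,U,D,F,B$ permutes four corners and four single edges as simultaneous $4$-cycles; every central slice twist $C_R,C_L,C_U,C_D,C_F,C_B$ touches neither corners nor single edges. In both cases the signatures of the induced corner and single-edge permutations coincide, and multiplicativity of the sign propagates the identity $\mathrm{sgn}\,\sigma=\mathrm{sgn}\,\tau$ to all of $\mathbf{G}_5$. Elements of $\mathbf{E}$ satisfy $\sigma=\mathrm{id}$, hence $\tau\in\mathcal{A}_{12}$.

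For the lower bound I would first check $\mathbf{E}\triangleleft\mathbf{G}_5$: conjugating an element that fixes every non-single-edge sticker by any cube move still yields such an element, because cube moves permute positions within each piece type. Let $T\subseteq\mathbf{S}_{12}$ denote the image of $\mathbf{G}_5$ under the action on the twelve single-edge positions; since an element of $\mathbf{E}$ is determined by its action on single edges, $\mathbf{E}$ embeds as a normal subgroup of $T$. The subgroup of $\mathbf{G}_5$ generated by the six external face twists acts on corners and single edges exactly as the $3\times 3\times 3$ Rubik's group acts on its corners and edges, and the classical theory of the latter realises every element of $\mathbf{S}_{12}$ as an edge permutation. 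Hence $T=\mathbf{S}_{12}\supseteq\mathcal{A}_{12}$, so $\mathbf{E}$ is a normal subgroup of the simple group $\mathcal{A}_{12}$, and the proof reduces to exhibiting any non-identity $g\in\mathbf{E}$.

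To produce such an element I would lift a classical $3$-cycle algorithm (a U-perm, for instance) from the $3\times 3\times 3$ Cube to the Professor's Cube: run the same word of external face twists. This already fixes the corners and performs a $3$-cycle on three single edges, but it also disturbs a handful of coupled edges and center cubies. A correcting word drawn from the subgroups $\mathbf{E_{_c}}$, $\mathbf{Z_{_c}}$, $\mathbf{Z_{_e}}$---built from slice twists and commutators in the spirit of this subsection, and therefore acting trivially on corners and on single edges---cancels that disturbance. The main obstacle is precisely this last verification: checking that the residual action on coupled edges and centers left by the lifted U-perm really does lie in the subgroup of $\mathbf{G}_5$ produced by the commutator constructions being developed elsewhere in this subsection. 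Once this is in place, the corrected word is the desired non-trivial element of $\mathbf{E}$, and the simplicity argument concludes.
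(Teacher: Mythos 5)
The paper does not actually prove this statement: it notes that corners and single edges of the Professor's Cube behave exactly as the corners and edges of the ordinary Rubik's Cube and refers to \cite{Signm82} for the classical fact. Your argument is therefore a genuinely different, self-contained route, and it is essentially sound. The upper bound via the invariant $\mathrm{sgn}(\sigma)=\mathrm{sgn}(\tau)$ on the generators is exactly the mechanism behind condition 1 of Theorem \ref{the: condizioni5x5}. The lower bound via $\mathbf{E}\trianglelefteq \mathbf{G}_5$, surjectivity of the single-edge action onto $S_{12}$, and simplicity of $\mathcal{A}_{12}$ is also correct, and it has the merit of confronting the point the paper's one-line reduction glosses over: a word in the face twists that $3$-cycles single edges on the underlying $3\times3\times3$ necessarily disturbs coupled edges and center cubies, so producing even one non-trivial element of $\mathbf{E}$ requires a correction step.

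The verification you flag as the remaining obstacle does close, and for a clean reason you should make explicit. Your lifted $3$-cycle word fixes all eight corners, and each quarter face turn is a $4$-cycle (odd) on corners, so the word contains an even number of quarter turns. Each quarter face turn is also a single $4$-cycle on center corners, a single $4$-cycle on center edges, and two $4$-cycles on coupled edges; hence the residual permutations on all three classes are even, and therefore lie in the images $\mathcal{A}_{24}\leqslant \mathbf{Z_{_c}}$, $\mathcal{A}_{24}\leqslant \mathbf{Z_{_e}}$, $\mathcal{A}_{24}\leqslant \mathbf{E_{_c}}$ already produced by the commutator $3$-cycles $z$, $w$, $e$ of this subsection (only these lower bounds are needed, so there is no circularity), while the coupled-edge orientations cancel automatically by condition 5 of Theorem \ref{the: condizioni5x5}, whose forward direction is independent of the present theorem. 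One caveat you inherit from the paper's definitions: for the conclusion $\mathbf{E}\cong\mathcal{A}_{12}$, rather than an extension of $\mathcal{A}_{12}$ by single-edge flips, one must read $\mathbf{E}$, like $\mathbf{C}=\mathbf{H}/\mathbf{T}$, as taken modulo orientation changes; your identification of $\mathbf{E}$ with its image in $S_{12}$ uses exactly that convention and should be stated.
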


The content of the above theorems is a well known fact concerning the Rubik's Cube, hence we refer to \cite{Signm82} for their proofs.  

The presence of coupled edges and of many different center pieces make the Professor's Cube essentially different both from the Rubik's and the Revenge cubes. In the next theorems we make use of the commutator, formally for two $ m,n\in\mathbf{G}_5 $, 
$$ [m,n]=m\cdot n\cdot m^{-1}\cdot n^{-1}. $$ 

The are 24 center corners, hence necessarily $ \mathbf{Z_{_c}}\leqslant S_{24} $. 
\begin{theorem}\label{Zc=A24}
$ \mathbf{Z_{_c}}\cong\mathcal{A}_{24} $, the alternating group of even permutation. 
\proof
We first show that $ \mathcal{A}_{24} \leqslant \mathbf{Z_{_c}} $. The move 
\begin{equation}\label{mossa z}
z= [[C_{F},C_{D}],U^{-1}],
\end{equation}

is a 3-cycle on center corners and acts as identity on all the remaining pieces (see Figure \ref{fig: mossa z}).

\begin{center}
\begin{figure}[h]
\RubikCubeSolved
\begin{minipage}{6cm}
\centering
\begin{tikzpicture}[scale=0.65]
\DrawNCubeAll{5}{O}{G}{W}

\node at (0.15, 3.65)
{$\bullet$};
\node at (2.15, 3.65)
{$\bullet$};

\node at (-0.41, -0.499)
{$\bullet$};

\node at (10.15, 3.65)
{$\bullet$};
\node at (12.15, 3.65)
{$\bullet$};
\node at (9.59, -0.499)
{$\bullet$};

\draw[line width= 1pt, ->] (10.1, 3) -- (9.65,0);
\draw[line width= 1pt, ->] (10, -0.1) -- (11.85,3);
\draw[line width= 1pt, ->] (11.8, 3.65) -- (10.5,3.65);
%
\end{tikzpicture}
\end{minipage}
\caption{The 3-cycle action of the move $z$ on center corners.}\label{fig: mossa z}
\end{figure}
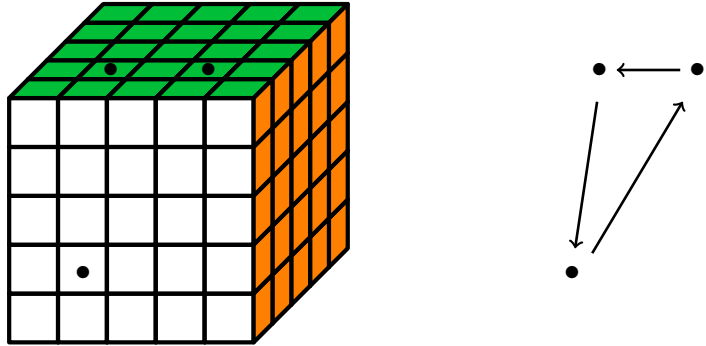
\end{center}
Observe that any three arbitrary target center corners can be moved to the positions permuted by $ z $ by a certain element $ g\in\mathbf{G}_5 $. Hence by the move $ g\cdot z\cdot g^{-1} $ we may cycle any center corners. As $ \mathcal{A}_{24} $ is generated by any 3-cycle on a set of twenty-four elements, we have the desired inclusion. 
For $ \mathbf{Z_{_c}}\leqslant\mathcal{A}_{24} $, we show that any odd permutation involving center corners permutes necessarily also some other piece, hence it cannot be in $ \mathbf{Z_{_c}} $. Indeed, suppose that there exists $ \alpha\in\mathbf{Z_{_c}} $ such that $ sgn(\alpha )=-1 $: $ \alpha $ shall be obtained as a sequence of basic moves. With no loss of generality we can assume that $ \alpha $ is a sequence of $ L,R,U,D,F,B $, since the moves $ C_{R}, C_{F}, C_{U}, C_{L}, C_{B}, C_{D} $ consist of an even permutation on center corners. On the other hand, any of the moves among $ L,R,U,D,F,B $ induces 4-cycles on center corners, center edges, corners, singles edges, respectively, and two four cycles on coupled edges. Hence the move $\alpha$ induces a move $ \beta=(\beta_{1},\beta_{2},\beta_{3}, \beta_{4})\in \mathbf{C}\times \mathbf{E}\times \mathbf{E_{_c}}\times\mathbf{Z_{_e}} $, such that $ sgn(\beta)= -1 $. But since $ sgn(\beta_{3})=+1 $, then one among $ \beta_1 $, $ \beta_2 $ and $ \beta_4 $ is different from the identity, therefore $ \alpha\not\in\mathbf{Z_{_c}} $, which gives raise to a contradiction. 
\endproof 
\end{theorem}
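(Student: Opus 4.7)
The plan is to prove the isomorphism by establishing two inclusions inside $S_{24}$, namely $\mathcal{A}_{24} \leqslant \mathbf{Z_{_c}}$ and $\mathbf{Z_{_c}} \leqslant \mathcal{A}_{24}$. The first inclusion will be carried out constructively, by producing a single 3-cycle on center corners that is the identity on all other pieces; the second is a parity argument in the spirit of sign counts used for the classical Rubik's Cube.

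For $\mathcal{A}_{24} \leqslant \mathbf{Z_{_c}}$, I would exhibit one element $z \in \mathbf{Z_{_c}}$ that realises a 3-cycle on some chosen triple of center corners while fixing positions and orientations of every other cubie. Once such a $z$ is available, the transitivity of $\mathbf{G}_5$ on triples of center corners lets me conjugate: for any target triple there exists $g \in \mathbf{G}_5$ sending the support of $z$ to that triple, and then $g z g^{-1}$ is the desired 3-cycle. Since 3-cycles generate $\mathcal{A}_{24}$, this gives the inclusion. To manufacture $z$ I would use the standard double-commutator trick. An inner commutator of two intersecting internal slice moves such as $[C_F, C_D]$ has support confined to the cubies simultaneously affected by both central planes, which is already a very small set. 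Commuting once more with a well-chosen external rotation such as $U^{-1}$ should eliminate every piece type other than center corners and leave a single 3-cycle. Confirming that the net effect is indeed a 3-cycle reduces to a direct piece-by-piece check.

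For $\mathbf{Z_{_c}} \leqslant \mathcal{A}_{24}$, I would use a parity count based on the action of the generators on two piece types simultaneously. Each external face rotation $R, L, F, B, U, D$ induces one 4-cycle on center corners (odd) and one 4-cycle on the $8$ corner cubies (odd); each internal slice rotation $C_R, C_L, C_F, C_B, C_U, C_D$ induces two 4-cycles on center corners (even) and fixes every corner cubie (trivially even). Consequently, for any $\alpha \in \mathbf{G}_5$ written as a word of length $N$ in these generators, the parity of the induced permutation on center corners equals the parity of the induced permutation on corners, both being $(-1)^k$ where $k$ counts the external letters in the word. Since any $\alpha \in \mathbf{Z_{_c}}$ fixes corners pointwise, its permutation on corners is even, forcing its permutation on center corners to be even as well.

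The step I expect to be the main obstacle is the explicit construction of $z$ in the first inclusion. The double commutator is a familiar device, but in the $5\times 5\times 5$ cube there are seven distinct piece types (corners, single edges, coupled edges, center corners, center edges, plus orientations of corners and of both kinds of edges) on which the candidate $z$ must act trivially. The symmetric nature of commutators automatically cancels orientation contributions, but verifying that no center edge, coupled edge, or single edge is left disturbed requires carefully tracking how $C_F$, $C_D$, and $U$ overlap on the outer layers of the cube. This is finite and elementary, yet technical, and is the only step in the argument that is not a matter of general principle.
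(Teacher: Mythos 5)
Your proposal is correct and follows essentially the same route as the paper: the inclusion $\mathcal{A}_{24}\leqslant \mathbf{Z_{_c}}$ via the double commutator $[[C_{F},C_{D}],U^{-1}]$ followed by conjugation, and the reverse inclusion via a parity count on the generators. Your treatment of the second inclusion --- tracking the single invariant $sgn(\mathrm{corners})=sgn(\mathrm{center\ corners})$ along arbitrary words in the generators --- is actually stated more cleanly than the paper's version, which argues by contradiction after an informal reduction to words in the external generators only, but the underlying idea is identical.
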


We now aim at studying the subgroup $ \mathbf{E_{_c}} $ of moves involving coupled edges only. Those edges are 24, each of which can assume two different orientations, however no single edge in a couple can be flipped (see Remark \ref{re:no flip}). 
\begin{theorem}\label{Ec=S24}
$ \mathbf{E_{_c}}\cong \mathcal{A}_{24} $.
\end{theorem}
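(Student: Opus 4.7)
The plan is to establish the two inclusions $\mathcal{A}_{24} \leq \mathbf{E_{_c}}$ and $\mathbf{E_{_c}} \leq \mathcal{A}_{24}$, in close parallel with the proof of Theorem \ref{Zc=A24}.

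For $\mathcal{A}_{24} \leq \mathbf{E_{_c}}$, the idea is to exhibit an explicit word $w$ in the twelve generators of $\mathbf{G}_5$ whose induced permutation is a $3$-cycle on coupled edges and the identity on every other cubie. In the spirit of the commutator $z=[[C_F,C_D],U^{-1}]$ used in Theorem \ref{Zc=A24}, the natural candidate is a double commutator of a central slice move with an external face move, arranged so that the three coupled edges lying at the intersection of the two rotated regions are cyclically permuted while every other displaced cubie is returned to its place by the commutator cancellations. Once such a $w$ is produced, conjugating $w$ by elements $g\in \mathbf{G}_5$ that send any prescribed ordered triple of coupled edges to the triple moved by $w$ realises every $3$-cycle on the $24$ coupled edges; since $\mathcal{A}_{24}$ is generated by its $3$-cycles, this gives the inclusion.

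For the opposite inclusion $\mathbf{E_{_c}} \leq \mathcal{A}_{24}$, I would adapt the sign count used in Theorem \ref{Zc=A24}, at the level of cubie permutations. Each external face move $R,L,F,B,U,D$ induces on coupled edges two disjoint $4$-cycles (even parity), on corners and on center edges a single $4$-cycle (odd parity), and a $4$-cycle on center corners. Each central slice move $C_R,C_L,C_F,C_B,C_U,C_D$ induces a single $4$-cycle on coupled edges (odd parity) and on center edges (odd parity), but does not touch corners at all. Now suppose for contradiction that some $\alpha\in\mathbf{E_{_c}}$ acts with sign $-1$ on coupled edges. Then the word representing $\alpha$ must contain an odd number of central slice letters. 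Since $\alpha$ fixes every center edge and both families contribute a single $4$-cycle there, the total length of the word is even, forcing the number of external face letters to be odd. But only external face moves touch corners, each one as a $4$-cycle, so $\alpha$ would act on corners with sign $-1$, contradicting the fact that $\alpha$ fixes every corner. This yields the desired inclusion.

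The main obstacle I expect is the explicit construction of the $3$-cycle $w$. Since the coupled edges occupy the corner positions of each central slice, a double commutator of two adjacent central slices with a face turn is a geometrically plausible source of such a move; however, in contrast with the center-corner case handled in Theorem \ref{Zc=A24}, one has to simultaneously track the orbits of center corners, center edges and the remaining coupled edges, and some trial and error is needed to produce a commutator whose secondary orbits all close back to the identity. Once $w$ is written down and verified, the rest of the proof reduces to the sign count above.
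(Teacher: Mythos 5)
Your argument for the inclusion $\mathbf{E_{_c}}\leqslant\mathcal{A}_{24}$ is complete and correct, and in fact somewhat more careful than the paper's own. The paper reduces ``without loss of generality'' to a word consisting only of central slice moves and then detects the obstruction on center edges; you instead keep both families of letters, use the fact that every basic move (of either family) induces a single $4$-cycle on center edges to force the total word length to be even, deduce that the number of face letters is odd, and detect the obstruction on corners. Both computations rest on the same parity data for the twelve generators, and your version closes the loophole in the paper's ``WLOG'' step, so this half stands.

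The genuine gap is in the inclusion $\mathcal{A}_{24}\leqslant\mathbf{E_{_c}}$: the entire content of this half is the exhibition of an explicit move that $3$-cycles three coupled edges while fixing everything else, and you do not produce one --- you only say that ``some trial and error is needed.'' Without the witness move the inclusion is not proved. Moreover, the shape you suggest in your closing paragraph, a double commutator of two adjacent central slices with a face turn, is the pattern of $z=[[C_{F},C_{D}],U^{-1}]$ and $w=[[C_{R}^{-1},C_{D}^{-1}],U]$, which produce $3$-cycles of \emph{center} cubies; the move the paper actually uses for coupled edges has a different shape, namely $e=[C_{L}^{-1},[L,U^{-1}]]$, a commutator of a single central slice move with a commutator of two face moves. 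Once such a move is written down and verified (including the check that it does not flip the three edges it cycles), your conjugation-plus-transitivity argument correctly yields all $3$-cycles and hence the inclusion.
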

\begin{proof}
We first show that $ \mathcal{A}_{24} \leqslant \mathbf{E_c} $. Indeed the move 
\begin{equation}\label{mossa e}
 e=[C^{-1}_{L},[L,U^{-1}]] 
 \end{equation}
is a 3-cycle on the coupled edges identified by the black dots in Figure \ref{fig: triciclo e}

 \begin{center}
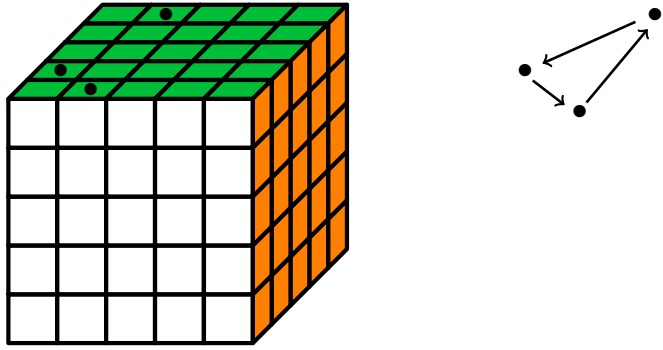
\begin{figure}[h]
\RubikCubeSolved
\begin{minipage}{6cm}
\centering
\begin{tikzpicture}[scale=0.65]
\DrawNCubeAll{5}{O}{G}{W}

\node at (-0.24, 3.25)
{$\bullet$};
\node at (-0.855, 3.65)
{$\bullet$};
\node at (1.3, 4.8)
{$\bullet$};

\node at (9.76, 2.8) {$\bullet$};
\node at (8.645, 3.65) {$\bullet$};
\node at (11.3, 4.8) {$\bullet$};
\draw[line width= 1pt, ->] (9.9, 3) -- (11.15,4.5);
\draw[line width= 1pt, ->] (10.9, 4.65) -- (9,3.8);
\draw[line width= 1pt, ->] (8.8, 3.45) -- (9.45,2.95);

\end{tikzpicture}
\end{minipage}
\caption{The 3-cycle on coupled edges moved by $e$.}\label{fig: triciclo e}
\end{figure}
\end{center}

As previously mentioned for centers, one can bring any target (coupled) edge in the positions switched by $ e $ using an element of $ g\in\mathbf{G}_5 $ and then solving the mess created by $ g^{-1} $. In this way, one obtains any 3-cycles in $ \mathbf{E_c} $, proving the desired inclusion. \\
\noindent
In order to get the inclusion  $ \mathbf{E_{_c}}\leqslant\mathcal{A}_{24} $, we  will show that any odd permutation involving coupled edges permutes necessarily also some other piece. Indeed,  suppose that there exists $ \beta\in\mathbf{E_{_c}} $ such that $ sgn(\beta )=-1 $: $ \beta $ shall be obtained as a sequence of basic moves. With no loss of generality we can assume that $\beta$ is a sequence of 
$C_{R}, C_{F}, C_{U}, C_{L}, C_{B}, C_{D}$ since the move  $ L,R,U,D,F,B $ induce an even permutation on coupled edges. On the other hand, any of the moves among $C_{R}, C_{F}, C_{U}, C_{L}, C_{B}, C_{D}$ induces 4-cycles on coupled edges and center edges and two 4-cycles on  corner edges. Thus  the move $\beta$ induces a move $ \gamma=(\gamma_{1},\gamma_{2})\in \mathbf{Z_{_c}}\times\mathbf{Z_{_e}} $, such that $ sgn(\gamma)= -1 $. But since $ sgn(\gamma_{1})=+1 $, then $\gamma_2$ is different from the identity and hence $\beta\not\in\mathbf{E_{_c}} $, the desired  contradiction. 
\end{proof}   
\begin{theorem}\label{Ze=S24}
$ \mathbf{Z_{_e}}\cong\mathcal{A}_{24} $.
\end{theorem}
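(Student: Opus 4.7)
The proof strategy parallels Theorems \ref{Zc=A24} and \ref{Ec=S24}, establishing the two inclusions $\mathcal{A}_{24}\leqslant\mathbf{Z_e}$ and $\mathbf{Z_e}\leqslant\mathcal{A}_{24}$ separately. For the first inclusion I would exhibit an iterated commutator $w$ that acts as a $3$-cycle on center edges while fixing every corner, single edge, coupled edge, and center corner. By analogy with the moves $z$ and $e$ in the previous two proofs, a natural candidate has the shape $[C_\ast,[M,C_\ast]]$, with $M$ an outer face twist and the two central twists chosen so that their stray effects on the other piece classes cancel. Once such a $w$ is in hand, conjugating by a suitable $g\in\mathbf{G}_5$ that brings any three target center edges into the support of $w$ yields $gwg^{-1}$ as a $3$-cycle on an arbitrary prescribed triple; since $\mathcal{A}_{24}$ is generated by its $3$-cycles, the inclusion follows.

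For $\mathbf{Z_e}\leqslant\mathcal{A}_{24}$ I use the same parity technique as in the previous two theorems. Let $\alpha\in\mathbf{Z_e}$ and write $\alpha$ as a word in the twelve basic clockwise generators; denote by $p$ (respectively $q$) the total number of occurrences of outer moves $L,R,U,D,F,B$ (respectively central moves $C_L,C_R,C_U,C_D,C_F,C_B$). Each outer move is a single $4$-cycle (odd) on corners and two $4$-cycles (even) on coupled edges; each central move is trivial on corners and one $4$-cycle (odd) on coupled edges. Triviality of $\alpha$ on corners forces $(-1)^p=+1$, so $p$ is even; triviality on coupled edges forces $(-1)^q=+1$, so $q$ is even. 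Since every basic generator acts as a $4$-cycle on center edges, the sign of $\alpha$ on center edges equals $(-1)^{p+q}=+1$, giving the required inclusion.

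The main obstacle is the first inclusion: one has to pin down a bracket whose residual $3$-cycles on coupled edges and on center corners all vanish, leaving only the prescribed $3$-cycle on center edges. Once the correct commutator has been identified the verification is a routine move-by-move check on each piece class, and the parity argument for the reverse inclusion is then elementary bookkeeping.
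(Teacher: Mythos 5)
Your overall architecture is the paper's: a conjugated $3$-cycle commutator to get $\mathcal{A}_{24}\leqslant\mathbf{Z_e}$, and a parity obstruction for the reverse inclusion. The one genuine gap is that you never produce the $3$-cycle: you only posit that a bracket of the shape $[C_\ast,[M,C_\ast]]$ with cancelling stray effects ``should'' exist, and you flag this yourself as the main obstacle. That existence claim is precisely the non-routine content of the first inclusion and cannot be left to analogy. The paper's move is $w=[[C_R^{-1},C_D^{-1}],U]$ --- note the bracketing is $[[C_\ast,C_\ast],M]$, as for the center-corner move $z=[[C_F,C_D],U^{-1}]$ of Theorem \ref{Zc=A24}, rather than the $[C_\ast,[M,C_\ast]]$ shape of the coupled-edge move $e$: the inner commutator of two central twists about perpendicular axes already confines the action to center cubies, and the outer commutator with $U$ then isolates a single $3$-cycle on center edges. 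Until such a move is exhibited and checked, the inclusion $\mathcal{A}_{24}\leqslant\mathbf{Z_e}$ is unproved.

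Your second inclusion, by contrast, is complete and correct, and takes a mildly different route from the paper. You count occurrences $p$ of outer and $q$ of central generators in a word for $\alpha$ and read off the signs on corners ($(-1)^p$), coupled edges ($(-1)^q$) and center edges ($(-1)^{p+q}$); triviality of $\alpha$ on corners and on coupled edges forces $p$ and $q$ even, hence sign $+1$ on center edges. The counts are right: an outer face twist is one $4$-cycle on corners, two $4$-cycles on coupled edges and one $4$-cycle on center edges, while a central twist is trivial on corners and a single $4$-cycle on each of coupled edges and center edges. The paper instead argues by contradiction: if $\mathbf{Z_e}=S_{24}$, then composing $C_R$ with suitable elements of $\mathbf{Z_e}$ and $\mathbf{Z_c}$ produces an odd element of $\mathbf{E_c}$, contradicting Theorem \ref{Ec=S24}. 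Your version is more self-contained, since it does not lean on Theorems \ref{Zc=A24} and \ref{Ec=S24}; just add the standard remark that inverse generators may be replaced by their cubes so that $\alpha$ can be taken to be a positive word in the twelve generators.
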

\begin{proof}
We proceed following the same path of the proof of Theorem \ref{Zc=A24}. Indeed it is easy to check that the move 
\begin{equation}\label{mossa z}
w=[[C_{R}^{-1},C_{D}^{-1}],U],
\end{equation}
is a 3-cycle on central edges and acts as an identity on all the other pieces, as depicted in Figure \ref{fig: triciclo $w$}.

 \begin{center}
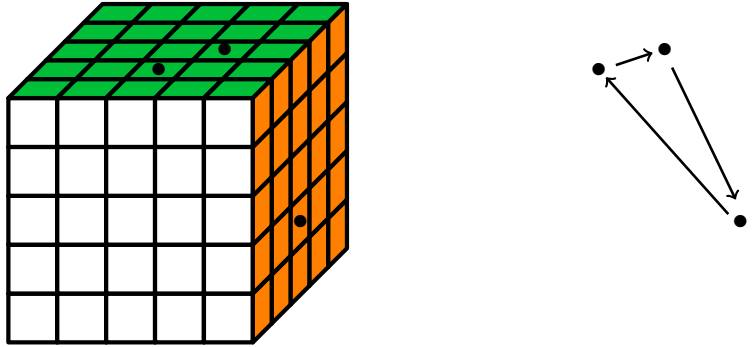
\begin{figure}[h]
\RubikCubeSolved
\begin{minipage}{6cm}
\centering
\begin{tikzpicture}[scale=0.65]
\DrawNCubeAll{5}{O}{G}{W}

\node at (1.15, 3.66)
{$\bullet$};
\node at (2.5, 4.05)
{$\bullet$};
\node at (4.05, 0.53)
{$\bullet$};

\node at (10.15, 3.66) {$\bullet$};
\node at (11.5, 4.05) {$\bullet$};
\node at (13.05, 0.53) {$\bullet$};

\draw[line width= 1pt, <-] (10.3, 3.5) -- (12.8,0.7);
\draw[line width= 1pt, <-] (12.95, 1) -- (11.65,3.7);
\draw[line width= 1pt, <-] (11.25, 4) -- (10.5,3.75);
\end{tikzpicture}
\end{minipage}
\caption{The 3-cycle induced by $w$ on center edges.}\label{fig: triciclo $w$}
\end{figure}
\end{center}

Notice once more that any three arbitrary target center edge can be placed in the positions permuted by $ w $ by a certain element $ g\in\mathbf{G}_5 $. Therefore, by the move $ g\cdot w\cdot g^{-1} $ we may cycle any center cubies. As $ \mathcal{A}_{24} $ is generated by any 3-cycle on a set of twenty-four elements, we have that $\mathcal{A}_{24} \leqslant \mathbf{Z_{_e}} $. \\
\noindent
Assume by contradiction that  $\mathbf{Z_{_e}}\leqslant \mathcal{A}_{24}$ is not satisfied, namely 
 $\mathbf{Z_{_e}}= S_{24}$. 
Consider  the move $ C_R $: it consists of an even permutation on center corners (two 4-cycles) and of an odd permutation on both coupled edges and center edges (a 4-cycle on each, respectively). Combining our assumption with Theorem \ref{Zc=A24} one can find moves $ \phi\in\mathbf{Z_e} $ and $ \varphi\in\mathbf{Z_c} $ such that $ \phi\circ\varphi\circ C_R $ acts as an odd permutation on center edges only, in constrast with Theorem \ref{Ec=S24}.
\end{proof}
\noindent
\textbf{Proof of Theorem \ref{the: condizioni5x5}} \\
$ (\Rightarrow) $ This implication is proven by checking that conditions 1,2,3,4 of Theorem \ref{the: condizioni5x5} are preserved by the basic moves. As any move is generated by them and the initial configuration trivially satisfies all the conditions above, this implies that any valid configuration does. \\

\noindent
1. and 2. We divide the basic moves into two subsets, $ M_{1}=\{ R, L, U, D, F, B\} $ and $ M_{2}=\{ C_R, C_L, C_U, C_D, C_F, C_B\} $. Moves in $ M_{1} $ consist of cycles of 4 elements each on corners, single edges and center corners and two $4$-cycles on coupled edges, hence necessarily preserve conditions $sgn(\sigma)=sgn(\tau)=sgn(\rho_{c})$ and $sgn(\tau_1)=sgn(\sigma)sgn(\rho_e) $. On the other hand, moves in $ M_2 $ act as identity on both corners and single edges, as two  $4$-cycles on center corners and as a $4$-cycle on center edges, therefore also in this case one has  $ sgn (\sigma ) = sgn (\tau)=sgn (\rho_{c}) $ and $sgn(\tau_1)=sgn(\sigma)sgn(\rho_e)$.   
\\
\\
3. $ \sum_{i}x_{i}\equiv 0 $(mod 3) follows from the fact that moves changing orientations of corners can be only generated by $ R,L,U,D,F,B $. Then corners of the Professor's Cube work exactly as those ones of the Rubik's Cube, where such a condition holds.  
\\
\\
4. $ \sum_{i}z_{i}\equiv 0 $(mod 2) is satisfied for the same reason of 2, i.e. singles edges orientation can be changes only by $ R,L,U,D,F,B $ and these moves always preserve the condition. 
\\
\\
5. First of all notice that in the initial configuration, it holds $ y_{i_t}= 0 $ for all $ i\in\{1,...,12\}$ and  $\delta_{a,a} =\delta_{b,b}= 1 $, therefore $ y_{i_{t,s}}=1-\delta_{t,s}= 0 $. 

As a valid configuration is in the orbit of the initial one, it is obtained by a sequence of basic moves, thus we need to check that those moves preserve condition $ y_{i_{t,s}}= 1 - \delta_{t,s} $. 

We consider moves splitted again in two sets (this time differently from above): $ M_{j}=\{R, U, D, L\} $ and $ M_{k}=\{F, B, C_{R}, C_{F}, C_{U}, C_{L}, C_{B}, C_{D} \} $; hence we have two possibilities: we may assume a basic move, say $ m $, either $m\in M_{j}$ or $m\in M_{k}$. \\
Assume $m\in M_{j}$. Recall that for the convention we have introduced about the assignation of orientation numbers to edges, $ m $ does not change edge cubies' orientation, so we get $ y_{i_{t,s}}= 0 $ for all $ i\in\{1,...,12\} $. Furthermore $ m $ acts on a configuration moving edges occupying an a-position in edges in a-position and the same holds for b-positions and hence $\delta_{t,s} = 1$. 

Let now $m\in M_k $. $ m $ changes orientations of some edges (the ones that it is actually permuting): more precisely it gives raise to a cycle of four edges or to two cycles of four edges each. Let $ i_{t,s} $ be one of those edges, then $ y_{i_{t,s}}= 1 $ and $ \delta_{t,s}=0 $ since a-positions and b-positions are swapped by $ m $.
\\
\\
$ (\Leftarrow ) $ 
Assume that the Professor's Cube is in a random configuration $ (\sigma, \tau, \tau_{_1}, \rho_c , \rho_e, x,y, z) $ satisfying conditions 1 to 5. We can check (simply by watching the cube) whether $ \sigma\in S_{8} $ is even or odd. If $ sgn(\sigma)= -1 $, it is enough to apply one among $ \{ R,L,U,D,F,B\} $ to get $ sgn(\sigma) = +1 $. 
Therefore in any case we can reduce to a configuration such that $sgn(\sigma)= +1 $. It follows that $ \sigma\in \mathcal{A}_{8} $, and, since by Theorem \ref{C=A8},  $ \mathbf{C}\cong\mathcal{A}_{8} $, there exists a move $ c_1\in\mathbf{C} $ such that $ c_{1}\cdot (\sigma, \tau, \tau_{_1}, \rho_c , \rho_e, x,y, z) = (id_{S_8}, \tau, \tau_{_1}, \rho_c, \rho_e,  x,y,z ) $. \\
By condition 1, in the obtained configuration, we have that $ sgn(\sigma) = sgn (\tau)= sgn (\rho)=sgn(id)=+1 $. Therefore, as $ \mathbf{E}\cong \mathcal{A}_{12} $ and $ \mathbf{Z_c}\cong\mathcal{A}_{24} $ (by Theorems \ref{E=A12} and \ref{Zc=A24}), there exist two moves, $ e\in\mathbf{E} $  and $ z\in\mathbf{Z_{_c}} $, respectively, such that  $ (e\circ z)\cdot (id_{S_8}, \tau, \tau_{_1}, \rho_c, \rho_e,  x,y,z )= (id_{S_8},id_{S_{12}} , \tau_{_1}, id_{S_{24}}, \rho_e,  x,y,z )  $. In the previous configuration, all pieces have been correctly positioned with exception of coupled edges and center-edges.

By condition 2. one has $sgn (\tau_1)=sgn (id)sgn (\rho_e)=sgn (\rho_e)$. 
Hence there are two possibilities, either $sgn (\tau_1)=sgn (\rho_e)=+1$ or $sgn (\tau_1)=sgn (\rho_e)=-1$.
If $sgn (\tau_1)=sgn (\rho_e)=-1$ by acting on the Professor 's Cube by the move  $C_R$ followed by an element of $\mathbf{Z_{_e}}=\mathcal{A}_{24}$ (by Theorem \ref{Ze=S24}) we get $sgn (\tau_1)=sgn (\rho_e)=+1$.
In this case by  Theorems \ref{Ec=S24} and \ref{Ze=S24} there exist   two moves $ f\in\mathbf{E_{_c}} $ and $ t\in\mathbf{Z_{_e}} $ such that $ (f\circ t)\cdot (id_{S_{8}},id_{S_{12}} , \tau_{_1}, id_{S_{24}}, \rho_e,  x,y,z )=  (id_{S_{8}},id_{S_{12}} , id_{S_{24}}, id_{S_{24}}, id_{S_{24}} ,  x,y,z ) $.
 
 At the present stage all cubies are correctly located, they shall only be correctly oriented. 

Condition 5 implies necessarily that whenever coupled edges are correctly located then they are also correctly orientated, namely $ y = 0 $.

It remains only to fix corners and single edges's orientations. But as they work as in the Rubik's cube, it is a well known fact that they can be always correctly oriented whenever conditions 3 and 4  are fulfilled, see for instance \cite{Bande82}.

We have proved that the initial configuration is in the orbit of a random one satisfying conditions 1 to 5.

\section{The $ n\times n\times n $ Rubik's Cube}\label{sec: nxn}

The $ n\times n\times n $ Rubik's Cube is a mathematical abstraction, with a physical counterpart for $ n\leq 17 $, to our best knowledge. By the $ n\times n\times n $, we mean an arbitrary extension of the original Rubik's Cube, possessing $ n $ (rotating) slices in every face. For $ n=3 $, it coincides with the Rubik's Cube, for $ n=4 $ with the Rubik's Revenge and for $ n= 5 $ with the Professor's Cube. 
It is not possible to describe the generalized $ n\times n\times n $ in terms of its cubies, however we aim at stating the first law of Cubology for it. 

There exists a relevant, intrinsic difference in the $ n\times n\times n$, depending on whether $ n $ is an odd or even number. In the former case, the cube presents 6 fixed center cubies that determine the colours any face shall assume through the resolution of the puzzle. On the other hand, whenever $ n $ is an even number, there is no fixed central piece (any center cubie can be rotated) and the number of center cubies will also be even.\footnote{For $ n= 4 $, for example, we have 4 center cubies in each face, for $ n=6 $, 16 in each face.}    

The above mentioned difference among `even cubes' and `odd cubes' is the leading motivation to treat the two cases separately. 

Abstracting enough, we can think of describing the moves on the generalized cube. The most external slices coincide with the faces of the cube, hence it still makes sense to call the rotation of the faces with $ R, L, U, D, F, B $ (right, left, up, down, front and back face, respectively). The number of `internal slices', meaning those slices which do not coincide with a face of the cube, depends on $ n $. \\ 
Upon noticing that central cubies are disposed in concentric circles and that a center cubie standing in each circle cannot be moved in any other circle, it makes sense to index such circles by $ k $, with $0\leq k\leq \frac{n-3}{2}$ in the `odd' case and $ 1\leq k\leq \frac{n}{2}-1 $ in the `even'.

 \begin{center}
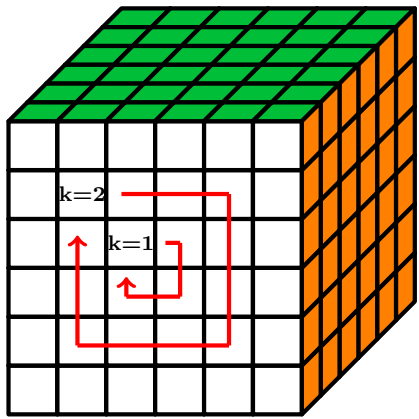
\begin{figure}[h]\label{fig:k pari}

\RubikCubeSolved
\begin{minipage}{10cm}
\centering


\begin{tikzpicture}[scale=0.65]
\DrawNCubeAll{6}{O}{G}{W}

\node at (0.2, 1.2)
{\tiny{\textbf{k=1}}};
\draw[-, ultra thick, color=red] (0.9, 1.2) -- (1.2, 1.2);
\draw[-, ultra thick, color=red] (1.2, 1.2) -- (1.2, 0.1);
\draw[-, ultra thick, color=red] (1.2, 0.1) -- (0.1, 0.1);
\draw[->, ultra thick, color=red] (0.1, 0.1) -- (0.1, 0.5);

\node at (-0.8, 2.2)
{\tiny{\textbf{k=2}}};
\draw[-, ultra thick, color=red] (0, 2.2) -- (2.2, 2.2);
\draw[ultra thick, -, color=red] (2.2, 2.2) -- (2.2, -0.9);
\draw[ultra thick, -, color=red] (2.2, -0.9) -- (-0.9, -0.9);
\draw[ultra thick, ->, color=red] (-0.9, -0.9) -- (-0.9, 1.35);

\end{tikzpicture}
\end{minipage}

\caption{Enumeration for concentric circles in the even case $n=6$.}
\end{figure}
\end{center}

\begin{center}
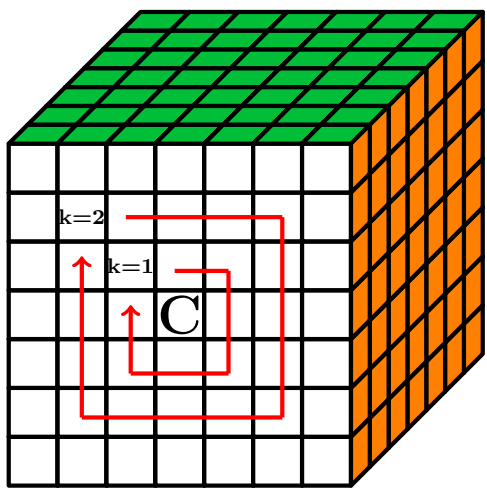
\begin{figure}[h]\label{fig:k dispari}

\begin{tikzpicture}[scale=0.65]
\DrawNCubeAll{7}{O}{G}{W}

\node at (0.8, 0.8)
{\LARGE{\textbf{C}}};

\node at (-0.2, 1.8)
{\tiny{\textbf{k=1}}};
\draw[-, ultra thick, color=red] (0.7,1.7) -- (1.8, 1.7);
\draw[ultra thick, -, color=red] (1.8,1.7) -- (1.8, -0.4);
\draw[-, ultra thick, color=red] (1.8, -0.4) -- (-0.2, -0.4);
\draw[->, ultra thick, color=red] (-0.2, -0.4) -- (-0.2, 1);

\node at (-1.2, 2.8)
{\tiny{\textbf{k=2}}};
\draw[-, ultra thick, color=red] (-0.3, 2.8) -- (2.9, 2.8);
\draw[ultra thick, -, color=red] (2.9, 2.8) -- (2.9, -1.3);
\draw[ultra thick, -, color=red] (2.9, -1.3) -- (-1.2, -1.3);
\draw[ultra thick, ->, color=red] (-1.2, -1.3) -- (-1.2, 2);

\end{tikzpicture}

\caption{Enumeration for concentric circles in the odd case $n=7$: $\textbf{C}$ indicates the fixed center.}
\end{figure}
\end{center}

With $ k = 1 $ - in the `even' Cube - we refer to the most internal circle (built of 4 cubies in each face), while the index $ k= 0 $ refers to the limit case of a degenerate circle made by the unique fixed central cubie, in the `odd' Cube.

Any internal slice stands closer to exactly an external face. For this reason, we can adopt the same notation from the previous section and indicating with $ C_{f,k} $, where $ f\in\{ R, L, U, D, F, B \} $, the rotation of the internal slice (relatively) close to the face $ f $, with $ k $ indexing the circle the move is acting on. 
As a matter of convention, from now on we write $ C_f $ instead of $ C_{f,1} $. 

\subsection{The even $n\times n \times n $ Cube}\label{subsec: n pari}

The `even cube' always 8 corner cubies (3 possible orientations), while the number of edges and centers depends on $ n $. It is not difficult to check that the numbers of centers as well as that of edges is even. In particular, the number of center cubies is 
\begin{equation}\label{eq: numero dei centri caso pari}
c = 6(n-2)^{2}, 
\end{equation}
while the number of edges is
\begin{equation}\label{eq: numero di edge}
e= 12 (n - 2). 
\end{equation}

In the special case of the Rubik's Revenge ($ n= 4 $), we have $ c = e = 24 $. Notice that in the limit case $n=2 $ we get a cube with no edges nor centers (only corners!), which is exactly the case of the $ 2\times 2\times 2 $, the so-called Pocket Cube.

The number of edges, counted in \eqref{eq: numero di edge}, is even: all of them are coupled edges.

Positions and orientations of corners can be mathematically described by a permutation $ \sigma\in S_{8} $ and a vector $ x\in (\mathbb{Z}_{3})^{8} $, respectively.

Since any center cubie can be moved, a permutation in $ S_{c} $ - with $ c $ defined in \eqref{eq: numero dei centri caso pari} - is, in principle, enough to describe the position of centers. However a more refined description of center cubies is required for our study which shall necessarily be based on concentric circles the pieces belong to.
It happens that cubies standing in any circle, with exception of the most internal one ($ k= 1 $), can be splitted into center corners and center edges. The former are the four cubies (in each circle) situated on the diagonals (with respect to the most internal circle) while the latter stand close to the side of the most internal circle.

%
%
%
%
%
%
%
%
%
%
%
%

Due to the mechanical structure of the Cube, center edges cannot be moved into the spatial position occupied by the center corners and viceversa. 
Notice that, in any circle, the number of center corners is equal to 24; hence we describe the position assumed by any of them by a permutation $ \rho_{c_{k}}\in S_{24} $, where index refers to a specific circle $k $. 

The total number of center edge cubies, which is $ 24(n-4) $ and the number $ z_k $ of center edges living in the $ k $-th circle, with $2\leq k$, is equal to $ 48(k-1) $. For this reason, a permutation $ \rho_{e_{k}}\in S_{z_{k}} $ describes the position assumed by center edge cubies. 
From now on, let us refer to the total number of circles as $K$; clearly, in the `even case', $K=\frac{n}{2}-1$.
Any edge in the `even cube' is part of a pair strictly connected to a certain circle $k$: for instance, an edge moved by $ C_{R,k} $ will be in pair with one of the edges (possibly) moved by $ C_{L,k} $, for any $ k $. The same holds for $ C_{U,k} $ with respect to $ C_{D,k} $ and for $ C_{F,k} $ with $ C_{B,k} $. For this reason, the position of edges is a concatenation of permutations $\tau_1, \tau_2,...,\tau_K $, acting on each single layer, hence it is described by a permutation $\tau\in (S_{24})^{K}$. Furthermore, taking up the same notational convention introduced  above for coupled edges in the Professor Cube, we assume that, in any pair of edges, one is of type $ a $ and its companion is of type $ b $. Clearly, also spatial positions are divided into type $ a $ and $ b $, remembering the concept of type is used both to describe a property of (edge) cubies and of (spatial) positions.
A given spatial position occupied by an edge cubie  is described recurring both to a  number  $1\leq k\leq K$, which specifies the ``circle'' it is naturally connected to (see the above discussion)
and to a number $j_t$ ($j=1, \dots 12$, $t\in \{a, b\}$), as edges connected to each circles are 24, divided in twelve pairs. Thus  a spatial position is determined by the symbol $k_{j_t}$.
Moreover, to a given edge cubie, we can associate its spatial position and its type. We write $k_{j_{t, s}}$ to denote an edge cubie of type $s\in \{a, b\}$
occupying the spatial position $k_{j_t}$.

Orientations of edges can be described recurring to a vector $ y\in(\mathbb{Z}_{2})^{e} $. More precisely, $y$ is just a concatenation of vectors $y_1, y_2,...,y_K $, each of which corresponds to a coupled edge configuration for layers $1, 2,...,K$, respectively. Thus,  $y=y_k\in ((\mathbb{Z}_{2})^{24})^{K}$.

A configuration for the `even' Cube is just a tuple $ (\sigma, \tau, \rho_{c_{k}}, \rho_{e_{k}}, x, y_k) $, with $ \sigma\in S_{8} $, $\tau\in (S_{24})^{K}$, $ \rho_{c_{k}}\in S_{24} $, $ \rho_{e_{k}}\in S_{z_{k}} $, for each $ k $ and $ x\in (\mathbb{Z}_{3})^{8} $, $y\in ((\mathbb{Z}_{2})^{24})^{K}$.

The initial configuration is the tuple where all permutations are the respective identity and any vector's component is equal to 0. A configuration is valid if and only if it stands in the orbit of the initial configuration, i.e. it might be obtained by the application of a finite sequence of moves from the initial configuration.  

We aim at stating the first law of cubology for the `even cube' which establishes the validity of an arbitrary configuration. 

\begin{theorem}\label{th: first law per il cubo pari}
A configuration $ (\sigma, \tau, \rho_{c_{k}}, \rho_{e_{k}}, x, y_k) $ of the `even Cube' is valid if and only if
\begin{enumerate}
\item[\emph{1.}] $ sgn (\sigma)=sgn (\rho_{c_{k}}) $, for every $1\leq k\leq K$,
\item[\emph{2.}] $ \sum_{i} x_{i}\equiv 0  \;\mathrm{mod}\; 3$,
\item[\emph{3.}] $y_{k_{ j_{t,s}}} = 1 - \delta_{t,s} $,
where $\delta_{a,a}=\delta_{b,b}=1$,  $\delta_{a,b}=\delta_{b,a}=0$ and $y_{k_{ j_{t, s}}}$ is the orientation of the edge cubie $k_{ j_{t, s}}$ with  
$j=1,...,12$, $t,s\in\{ a,b\}$,
\item[\emph{4.}] $sgn (\rho_{e_{k}})=+1$, for every $2\leq k\leq K$.
\end{enumerate}
\end{theorem}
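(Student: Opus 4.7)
The plan is to mirror the two-directional strategy used in the proof of Theorem \ref{the: condizioni5x5}. For the forward implication ($\Rightarrow$), I would verify that each of the twelve generators of the group of the $n\times n\times n$ Cube preserves conditions (1)--(4). Each external face move $R,L,U,D,F,B$ induces a single 4-cycle on corners, a single 4-cycle on the center corners of every circle $k$, two 4-cycles on the coupled edges of each layer $k$, and (for $K\geq 2$) two 4-cycles on the center edges of each circle $k\geq 2$; this simultaneously flips $\mathrm{sgn}(\sigma)$ and every $\mathrm{sgn}(\rho_{c_k})$ while leaving every $\mathrm{sgn}(\rho_{e_k})$ equal to $+1$, so (1) and (4) are preserved. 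Each internal slice move $C_{f,k}$ is the identity on corners, contributes exactly one 4-cycle on the coupled edges of a specific layer, and contributes an even number of 4-cycles (hence an even permutation) on the center corners and on the center edges of every circle it touches, so it preserves all sign conditions. For (2) and (3) I would recycle the arguments from the Professor's Cube: corner twists are only produced by external face moves (as in the classical Rubik's Cube, so (2) is preserved), and the $a/b$ labelling convention built into the definition of $y_{k_{j_{t,s}}}$ makes (3) automatic for every generator.

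For the reverse implication ($\Leftarrow$), I would first transport the subgroup analysis of Subsection \ref{subsec: studio di G5} to the $n\times n\times n$ setting, layer by layer. Specifically, I would establish the following analogues of Theorems \ref{C=A8}--\ref{Ze=S24}: the corner subgroup $\mathbf{C}\cong \mathcal{A}_{8}$; for each $1\leq k\leq K$ the subgroup $\mathbf{Z}_{c_k}$ acting solely on the center corners of circle $k$ is isomorphic to $\mathcal{A}_{24}$; for each $2\leq k\leq K$ the subgroup $\mathbf{Z}_{e_k}$ acting solely on the center edges of circle $k$ is isomorphic to $\mathcal{A}_{z_k}$; and for each $1\leq k\leq K$ the subgroup $\mathbf{E}_k$ acting solely on the coupled edges of layer $k$ is isomorphic to the full symmetric group $S_{24}$. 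The alternating-group statements would follow by replicating the commutator constructions of Theorems \ref{Zc=A24} and \ref{Ze=S24} with the layer-indexed moves $C_{f,k}$ in place of $C_f$; the resulting 3-cycles generate the alternating group by conjugation. The statement $\mathbf{E}_k\cong S_{24}$ would be obtained by observing that $C_{f,k}$ is an odd permutation on the coupled edges of layer $k$ and an even permutation on every other piece, so that post-composing it with suitable elements of the $\mathbf{Z}_{c_{k'}}$'s and $\mathbf{Z}_{e_{k'}}$'s cancels the central contribution and produces an odd permutation acting on $\mathbf{E}_k$ alone, which together with the usual 3-cycle commutator yields the full symmetric group.

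With these subgroups in hand, the reduction to the initial configuration proceeds exactly as in the proof of Theorem \ref{the: condizioni5x5}: if $\mathrm{sgn}(\sigma)=-1$, apply one external face move to normalize all parities via condition (1); solve the corners using $\mathbf{C}\cong\mathcal{A}_8$; then, for each $k$, solve the center corners using $\mathbf{Z}_{c_k}$, the center edges (permissible by condition (4)) using $\mathbf{Z}_{e_k}$, and the coupled edges using $\mathbf{E}_k$; finally fix corner orientations via condition (2) and edge orientations via condition (3), which forces $y_{k_{j_{t,s}}}=0$ once every coupled edge sits in a position of its own type. The main technical burden is the step $\mathbf{E}_k\cong S_{24}$ for every layer: this is the key difference from Theorem \ref{the: condizioni5x5}, where the coupled-edge subgroup was only alternating because of the linking condition $\mathrm{sgn}(\tau_{1})=\mathrm{sgn}(\sigma)\mathrm{sgn}(\rho_e)$ imposed by the presence of the fixed centres. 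I expect this to be the main obstacle, and I would rely on the explicit $n=6$ verification in Subsection \ref{subsec: 6x6} to confirm that the commutator constructions iterate independently across all $K$ layers without interference.
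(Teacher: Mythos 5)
Your proposal is correct and its skeleton coincides with the paper's: the forward implication is proved by checking that every generator preserves conditions (1)--(4), and the reverse implication by establishing $\mathbf{C}\cong\mathcal{A}_{8}$, $\mathbf{Z}_{c_k}\cong\mathcal{A}_{24}$, $\mathbf{Z}_{e_k}\cong\mathcal{A}_{z_k}$ and $\mathbf{E}_k\cong S_{24}$ and then descending to the initial configuration --- the paper only writes this out for $n=6$ (Subsections \ref{subsec: 6x6} and \ref{subsec: proof caso n=6}) and appeals to Remark \ref{rem: l'n si generalizza dal 6} for general even $n$, so your text is essentially the asserted generalization. The one point where you genuinely diverge is the step you rightly single out as the crux, namely producing an odd permutation in $\mathbf{E}_k$: the paper (Theorem \ref{th: E e S_e}) exhibits the explicit fifteen-move ``pair-swap'' algorithm $m=C_{R}^{2}B^{2}D^{2}\cdots C_{R}^{2}$, checks that it is even on corners and centres, and then corrects with elements of $\mathbf{C}$, the $\mathbf{Z}_{c_{k'}}$ and $\mathbf{Z}_e$; you instead observe that the generator $C_{f,k}$ itself is already a $4$-cycle (hence odd) on the layer-$k$ coupled edges while acting evenly on every other kind of piece, so the same correction trick applies directly to a basic move. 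Your variant is more elementary (no ad hoc algorithm to verify) and more transparently uniform in $k$ and $n$, since the relevant count --- two $4$-cycles on the circle-$k$ centre corners and an even number of $4$-cycles on the centre edges of each circle $k'\geq k$ --- holds for every internal slice of every even cube; the paper's version buys a pure transposition, which is aesthetically pleasant but not needed once the alternating centre subgroups are in place. The only slip worth flagging is cosmetic: an external face move acts on the circle-$k$ centre edges of the turned face as $2(k-1)$ four-cycles rather than two when $k>2$; the permutation is still even, so conditions (1) and (4) are preserved exactly as you claim.
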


\begin{remark}\label{rem: il Revenge segue dal teorema generale con n pari}
The `first law of Cubology' for the Rubik's Revenge proved in \cite{Loi} is nothing but a special case (for $ n = 4 $ and $ k=1 $) of Theorem \ref{th: first law per il cubo pari}. In such case there are no center edges and only twentyfour center corners (the only center cubies there), hence condition (4) is trivially satisfied. 
\end{remark}

\subsection{The complete study of the case $n=6$}\label{subsec: 6x6}

The number $ g $ of stickers one may find on the cube depends on $n$, namely 
\begin{equation}\label{eq: n° sticker al variare di n}
g = 6 n^{2}.
\end{equation}

The cube under analysis ($n=6$) presents 216 stickers. We refer to $\mathbf{M}_6$ as the group of its moves. We moreover define the group of the $6\times 6 \times 6$ cube by the group homomorphism 
\begin{equation}\label{eq: definizione phi}
 \varphi: \mathbf{M}_{6}\rightarrow S_{216},
 \end{equation}
sending a move $ m\in\mathbf{M}_6 $ to the corresponding permutation of stickers induced by it. The group of the Cube, $ \mathbf{G}_6 $, is the quotient group $\mathbf{M}_6/ker(\varphi) $.
We focus on certain subgroups of $ \mathbf{G}_6 $ acting on cubies of the same kind only (for example corners, edges, centers and so on) and leaving all the others untouched.  

Notice that corners behave exactly as in the Rubik's Cube. For this reason, the subgroup  acting on corners only corresponds to a subgroup the wreath product $ \mathbf{H} = \mathcal{A}_{8} \bigotimes_{Wr} \mathbb{Z}_{3} $. The group ruling position changes only is $ \mathbf{C}= \mathbf{H}/\mathbf{T} $, where $ \mathbf{T} $ is the (normal) subgroup consisting of all al possible twists. It is folklore that $ \mathbf{C}\cong \mathcal{A}_{8} $ (see for example \cite{Bande82}).

Central cubies are distinguished according to the circle where they stand. In this particular case, we only have two circles, $k=1$ (the inner one) and $k=2$ (the outer one).
The inner circle of each face is formed by 4 cubies\footnote{They are actually the center cubies of the Rubik's Revenge.}: there is no need to split them into center corners and center edges as they are of of the same kind. We refer to $\mathbf{Z}_{c_1}$ as the subgroup of $\mathbf{G}_6$ permuting these 24 center cubies only. On the other hand, central cubies living in the second circle (for $k=2$) shall be divided into 24 center corners and 48 center edges, according to the convention introduced before. We refer to $\mathbf{Z}_{c_2}$ and $\mathbf{Z}_e$, as the subgroups permuting center corners standing in the second circle and center edges, respectively.

\begin{theorem}\label{th: Z_1 = A24}
$\mathbf{Z}_{c_1}\cong\mathbf{Z}_{c_2}\cong\mathcal{A}_{24}$.
\end{theorem}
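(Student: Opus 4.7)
The plan is to adapt the strategy of Theorem \ref{Zc=A24}, proving each of the two isomorphisms through a double inclusion $\mathcal{A}_{24}\leq \mathbf{Z}_{c_i}\leq \mathcal{A}_{24}$ for $i\in\{1,2\}$. In both cases the argument runs identically once one replaces the internal slices $\{C_F, C_D,\ldots\}$ used for the Professor's Cube with the slices $\{C_{F,i}, C_{D,i},\ldots\}$ acting on the appropriate circle.

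For $\mathcal{A}_{24}\leq \mathbf{Z}_{c_i}$ the starting point is to exhibit a single $3$-cycle belonging to $\mathbf{Z}_{c_i}$. The natural candidates are the commutators $[[C_{F,i}, C_{D,i}], U^{-1}]$, which are literal analogues in the $6\times 6\times 6$ of the move $z$ appearing in \eqref{mossa z}. A direct check, tracking cubies through the four conjugates, should confirm that each of these moves acts as a $3$-cycle on the twenty-four central cubies of the appropriate family and as the identity on every other piece, including orientations. Once such a $3$-cycle is available, arbitrary $3$-cycles on those twenty-four cubies are obtained by conjugation with suitable elements $g\in\mathbf{G}_6$, and since $\mathcal{A}_{24}$ is generated by $3$-cycles the inclusion follows.

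For $\mathbf{Z}_{c_i}\leq \mathcal{A}_{24}$ I would run a parity argument. A direct inspection of the twelve basic moves reveals the following pattern on the five piece-families (corners, coupled edges, $k=1$ centers, $k=2$ center corners, $k=2$ center edges): each external rotation $R,L,U,D,F,B$ is a single $4$-cycle on corners, a single $4$-cycle on the $k=1$ centers of the rotated face and a single $4$-cycle on the $k=2$ center corners of that face (hence an odd permutation on all three), while being even on coupled edges and on $k=2$ center edges; each internal slice $C_{f,1}$ or $C_{f,2}$ is the identity on corners and decomposes as a product of an even number of $4$-cycles on every other family. Consequently the sign of any element of $\mathbf{G}_6$ on corners coincides with its sign on the cubies moved by $\mathbf{Z}_{c_i}$: both equal $(-1)^{n_O}$, where $n_O$ is the number of external-face factors in any word for the element. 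Any $\alpha\in \mathbf{Z}_{c_i}$ is trivial on corners, hence even on the relevant twenty-four cubies, which gives the desired inclusion.

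The hard part will be the first inclusion: the commutator bookkeeping must verify not only that each proposed move $3$-cycles the correct three central cubies, but also that it leaves the other circle of centers, the coupled edges, the corners and every orientation vector completely fixed. This is mechanical in nature but delicate, since a slip on any single piece-family is enough to invalidate the commutator. The parity half of the proof, by contrast, collapses to the sign tabulation outlined above and is essentially routine.
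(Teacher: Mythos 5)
Your proposal is correct and follows essentially the same route as the paper: the same commutators $[[C_{F,k},C_{D,k}],U^{-1}]$ conjugated around to generate $\mathcal{A}_{24}$, and the same parity obstruction via corners (the paper phrases it as the odd corner permutation induced by $R,L,U,D,F,B$ being impossible to restore inside $\mathbf{C}\cong\mathcal{A}_8$, which is your $(-1)^{n_O}$ sign-matching argument in different words). Your explicit sign tabulation across the five piece families is, if anything, a slightly cleaner formulation of the paper's second inclusion.
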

\proof
We first show that $\mathbf{Z}_{c_1}\cong\mathcal{A}_{24}$. Then, similarly, we will prove that $\mathbf{Z}_{c_2}\cong\mathcal{A}_{24}$.
The group $ \mathbf{Z}_{c_1} $ is exactly the group permuting centers (only) in the Rubik's Revenge. The move $ z_1= [[C_{F},C_{D}],U^{-1}] $ gives a 3-cycle of any 3 arbitrary center corners standing in the inner circle, and this is enough to conclude that $ \mathcal{A}_{24}\leq \mathbf{Z}_{c_1} $. On the other hand, we claim that there exists no odd permutation in $ \mathbf{Z}_{c_1} $. Observe indeed that the only moves provoking an odd permutation on centers (in the inner circle) are $R,L,U,D,F, B$ (rotations of the most external slices). Indeed the set of moves $C_{f,2}$ (with $f\in\{R,L,U,D,F,B\}$) does not affect inner centers, while any of the move $C_{f}$ acts on the considered pieces as an even permutation (two cycles of 4 elements). All the moves $R,L,U,D,F,B$ produce odd permutations on corners, which cannot be restored after the action of such moves, as $\mathbf{C}\cong\mathcal{A}_8$. Hence, $ \mathbf{Z}_{c_1} $ possesses no odd permutation.
\noindent
The same strategy can be adapted in order to prove that $\mathbf{Z}_{c_2}\cong\mathcal{A}_{24}$. Indeed the move $ z_2= [[C_{F,2},C_{D,2}],U^{-1}] $ gives a 3-cycle of any 3 arbitrary center corners standing in the second circle (the most extenal in this particular case), and this gives that $ \mathcal{A}_{24}\leq \mathbf{Z}_{c_2} $. In order to show our claim, we only need to observe that no odd permutation can belong to $\mathbf{Z}_{c_2} $. Indeed, the only moves inducing an odd permutation are rotations $R,L,U,D,F,B$ (as moves $C_{f,2}$ act as even permutations on the considered pieces), but, as they also induce odd permutation on corners which cannot be restored afterwards, they cannot generate an odd permutation in $\mathbf{Z}_{c_2} $, hence $\mathbf{Z}_{c_2}\cong\mathcal{A}_{24}$.
\endproof

\begin{theorem}\label{Ze=S48}
$ \mathbf{Z}_{e} \cong \mathcal{A}_{48} $.
\end{theorem}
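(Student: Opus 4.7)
The plan is to mirror the two-inclusion strategy used in Theorem \ref{Ze=S24} (the $5\times 5$ case) and in Theorem \ref{th: Z_1 = A24}, establishing the inclusions $\mathcal{A}_{48}\leqslant \mathbf{Z}_e$ and $\mathbf{Z}_e\leqslant \mathcal{A}_{48}$ separately.

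For the inclusion $\mathcal{A}_{48}\leqslant \mathbf{Z}_e$, I would exhibit a single commutator-type word in the generators whose action is a $3$-cycle on three $k=2$ center edges and the identity on every other cubie. A natural candidate is an analogue of the move $w=[[C_R^{-1},C_D^{-1}],U]$ used in the $5\times 5$ case; in the $6\times 6$ cube the slice $C_R$ touches only the $k=2$ ring of each adjacent face, so the same commutator (possibly with $C_{R,2}$ or $C_{D,2}$ substituted for some of the letters in order to cancel the residual action on center corners) should localize to three $k=2$ center edges. One verifies by tracking each cubie that the inner commutator $[C_R^{-1},C_D^{-1}]$ is supported only on the intersection of the two slices, and that the outer commutator with $U$ annihilates any remaining action on corners, coupled edges, and center corners. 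Once $w$ is in hand, for any three target center edges a preliminary element $g\in\mathbf{G}_6$ can be found that brings them into the positions permuted by $w$; then $g\cdot w\cdot g^{-1}$ realizes an arbitrary $3$-cycle in $\mathbf{Z}_e$, and since $3$-cycles generate $\mathcal{A}_{48}$ the inclusion follows.

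For the reverse inclusion $\mathbf{Z}_e\leqslant \mathcal{A}_{48}$, unlike in the $5\times 5$ case a direct parity count on the generators already suffices. A face rotation $R$ (and symmetrically $L,U,D,F,B$) moves only the $8$ center edges lying on face $R$, and the $90^\circ$ rotation decomposes them into exactly two $4$-cycles---an even permutation. The first internal slice $C_R$ moves $8$ center edges, two on each adjacent face $F,B,U,D$, which again form two $4$-cycles; the deeper slice $C_{R,2}$ analogously moves $8$ center edges in two $4$-cycles. Every basic move therefore induces an even permutation on the $48$ center edges, so every element of $\mathbf{G}_6$ does, and in particular $\mathbf{Z}_e\leqslant \mathcal{A}_{48}$.

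The main technical hurdle lies in the first part: the explicit verification that the commutator cancels all side effects on corners, coupled edges, $k=1$ centers, and $k=2$ center corners. The parity count in the second part is routine, once one observes that for $n=6$ the number $n-4=2$ of center edges lying on each lateral face of a given slice is already even, so they contribute two $4$-cycles instead of a single one: this is precisely what spares us the indirect contradiction argument, combining Theorems \ref{Ec=S24} and \ref{Zc=A24}, that was needed in the proof of Theorem \ref{Ze=S24}.
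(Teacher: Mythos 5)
Your proposal follows essentially the same route as the paper: the paper exhibits the commutator $p=[[C_{F},C_{D,2}],U^{-1}]$ --- a mixed $k=1$/$k=2$ slice commutator of exactly the kind you anticipate --- as a $3$-cycle on second-circle center edges, conjugates it to obtain all of $\mathcal{A}_{48}$, and then settles the reverse inclusion by the same direct parity count (every basic move acts on the $48$ center edges as two $4$-cycles). The only slip is your claim that $C_R$ touches only the $k=2$ ring of each adjacent face --- it also moves two $k=1$ centers per face, which is precisely why the paper replaces $C_D$ by $C_{D,2}$ in the commutator, as you correctly guessed might be necessary.
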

\begin{proof}
The move $ p= [[C_{F},C_{D,2}],U^{-1}] $ gives a 3-cycle of any three arbitrary center edges standing in the second circle, therefore $ \mathcal{A}_{48}\leq \mathbf{Z}_{e} $.

To show that no odd permutation belongs to $\mathbf{Z}_{e}$, it is enough to observe that all the (basic) moves shifting center edges permutes them of an even permutation (two 4-cycles). Therefore $ \mathbf{Z}_{e} \cong \mathcal{A}_{48} $.
\end{proof}

The number of (coupled) edges in the considered case is equal to 48 (24 inner and 24 outer). We refer to $ \mathbf{E}_1 $, $\mathbf{E}_2$, respectively, as the subgroups permuting the inner and outer, respectively, edges only. 

\begin{theorem}\label{th: E e S_e}
$ \mathbf{E}_1 \cong\mathbf{E}_2\cong S_{24}$.
\end{theorem}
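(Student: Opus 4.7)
The plan is to follow the approach of Theorem \ref{Ec=S24}: first to establish the inclusion $\mathcal{A}_{24}\leq \mathbf{E}_i$ for $i=1,2$ by producing an explicit $3$-cycle, and then to exhibit in each subgroup an odd permutation, this time \emph{isolated} on the edge cubies. The key new feature of the $6\times 6\times 6$ cube with respect to the Professor's Cube is that its slice rotations $C_R$ and $C_{R,2}$ each induce an \emph{even} permutation on every class of center cubies that they disturb; it is exactly this that allows the odd parity of the edge-part to be cancelled off against the centers and thereby isolated.

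To get $\mathcal{A}_{24}\leq \mathbf{E}_2$ I would use a commutator completely analogous to the move $[C_L^{-1},[L,U^{-1}]]$ already exploited in Theorem \ref{Ec=S24}, which produces a $3$-cycle on three specific outer-pair edges while acting as the identity on all other cubies; the analogous move $[C_{L,2}^{-1},[L,U^{-1}]]$ plays the same role for the inner-pair edges. Conjugating by arbitrary elements of $\mathbf{G}_6$ produces every $3$-cycle on outer (resp.\ inner) edges, and such cycles generate the full alternating group.

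Next I would produce an odd permutation in $\mathbf{E}_2$. A direct inspection of the $90^\circ$ action of $C_R$ on the perimeter of its slice shows that $C_R$ decomposes as a single $4$-cycle on the four outer-pair edges sitting at the corners of the slice (odd), two $4$-cycles on the eight outer-circle center corners on the perimeter (even), two $4$-cycles on the eight outer-circle center edges on the perimeter (even), and the identity on every other visible cubie --- in particular on corners, on all inner-pair edges, and on every inner-circle center. Since both centre parts are even, by Theorem \ref{th: Z_1 = A24} there exists $\alpha\in \mathbf{Z}_{c_2}\cong \mathcal{A}_{24}$ undoing the outer-circle center-corner part of $C_R$, and by Theorem \ref{Ze=S48} there exists $\beta\in \mathbf{Z}_e\cong \mathcal{A}_{48}$ undoing the outer-circle center-edge part. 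The product $\alpha\cdot\beta\cdot C_R$ then belongs to $\mathbf{E}_2$ and acts as a single $4$-cycle on outer edges, hence is odd, and combined with $\mathcal{A}_{24}\leq \mathbf{E}_2$ this forces $\mathbf{E}_2=S_{24}$. The argument for $\mathbf{E}_1$ is completely parallel, starting from $C_{R,2}$, whose visible action is a $4$-cycle on inner-pair edges, two $4$-cycles on outer-circle center edges, and two $4$-cycles on inner-circle centers (identity elsewhere); cancelling the even center parts with $\mathbf{Z}_e\cong\mathcal{A}_{48}$ and $\mathbf{Z}_{c_1}\cong\mathcal{A}_{24}$ yields an odd element of $\mathbf{E}_1$, whence $\mathbf{E}_1=S_{24}$.

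The principal technical obstacle is precisely the combinatorial bookkeeping just sketched: one must trace how a $90^\circ$ rotation partitions the $16$ non-corner visible cubies on the perimeter of a $6\times 6$ slice into four $4$-cycles, classify each as consisting of center corners or of center edges (and as outer- or inner-circle), and check that one always obtains an even number of $4$-cycles on each type. This parity calculation is exactly the combinatorial feature that succeeds for $n=6$ and fails for $n=5$, where $C_R$ induces a single $4$-cycle on center edges and so forces odd parity on both coupled edges and center edges simultaneously, yielding only $\mathcal{A}_{24}$.
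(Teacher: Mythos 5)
Your proposal is correct, and its first half (the commutator $3$-cycles plus conjugation, giving $\mathcal{A}_{24}\leq \mathbf{E}_i$) coincides with the paper's argument; but your route to the odd permutation is genuinely different from the paper's. The paper invokes the well-known $15$-move edge-pair-swapping algorithm $m = C_{R}^{2}B^{2}D^{2}C_{L}^{-1}D^{2}C_{R}D^{2}C_{R}^{-1}D^{2}F^{2}C_{R}^{-1}F^{2}C_{L}B^{2}C_{R}^{2}$ (and its analogue $n$ built from $C_{f,2}$), asserts via ``an easy calculation'' that it is a transposition on one edge class and even on corners and centers, and then cleans up with Theorems \ref{th: Z_1 = A24} and \ref{Ze=S48}. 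You instead take a single basic slice move, read off its full cycle structure --- one $4$-cycle on the relevant edge class, two $4$-cycles on each class of center cubies it disturbs, identity elsewhere --- and cancel the even center parts with the same two theorems to isolate an odd element of $\mathbf{E}_i$. This buys self-containedness: the only computation needed is the cycle decomposition of a generator, which is immediate, whereas the paper's proof rests on an unverified property of a long composite word; the trade-off is that the paper's $m$ delivers a clean transposition while yours delivers a $4$-cycle, which is just as good for generating $S_{24}$ on top of $\mathcal{A}_{24}$. Your closing remark correctly identifies why the same trick fails for $n=5$ (where $C_R$ is odd on center edges as well, cf.\ Theorem \ref{Ec=S24}). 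One caveat: your slice labels are the reverse of the paper's convention, in which $k$ indexes the circle acted upon and $k=1$ is the innermost, so the paper's $C_R=C_{R,1}$ is the deeper slice moving the \emph{inner} edge pair and circle-$1$ centers, while $C_{R,2}$ is the slice adjacent to the $R$ face moving the outer edges and the circle-$2$ center corners; the moves you describe exist and have exactly the cycle structures you claim, but you have attached each name to the other slice (and likewise swapped the roles of $e_1$ and $e_2$), so the labels would need to be interchanged to match the paper.
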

\begin{proof}
The case $\mathbf{E}_1\cong S_{24}$ is proved as follows. 
The move $ e_1 = [C_L^{-1},[L,U^{-1}]] $ gives a 3-cycle of the most internal edge cubies\footnote{This is exactly the same move used for the Rubik's Revenge (see \cite[Theorem 4.3]{Loi}).} as depicted in Figure \ref{fig: triciclo E1}.

\begin{center}
\begin{figure}[h]\label{fig:k pari}

\RubikCubeSolved
\begin{minipage}{10cm}
\centering


\begin{tikzpicture}[scale=0.65]
\DrawNCubeAll{6}{O}{G}{W}

\node at (0.2, 3.2) {$\bullet$};

\node at (-0.85, 4.65) {$\bullet$};

\node at (2.3, 5.8) {$\bullet$};

\end{tikzpicture}
\end{minipage}

\caption{The 3 cycle induced by the move $e_1$ on the most internal pair of edges.}\label{fig: triciclo E1}
\end{figure}
\end{center}

%
%
%
%
%
%
%
%
%
%

Therefore we have $ \mathcal{A}_{24}\leq \mathbf{E}_1 $. To show the other inclusion, we preliminary observe that the move 
$$ m = C_{R}^{2}B^{2}D^{2}C_{L}^{-1}D^{2}C_{R}D^{2}C_{R}^{-1}D^{2}F^{2}C_{R}^{-1}F^{2}C_{L}B^{2}C_{R}^{2} $$ 
gives a transposition of a pair of edges\footnote{The move may seem complicated at first sight, however it is well known by any cubemaster as it is an essential one to solve the Rubik's Revenge (for a description see for instance http://www.baumfamily.org/dave/rubiks.html).}. An easy calculation reveals that $ m $ acts as an even permutation on central cubies and corners. Therefore, by Theorems \ref{th: Z_1 = A24} and \ref{Ze=S48} there exists moves $\varphi\in\mathbf{Z}_{c_1}\times \mathbf{Z}_{c_2}$, $\psi\in\mathbf{Z}_e$ and $\chi\in\mathbf{C}$, s.t. $\varphi\circ\psi\circ\chi\circ m $ belongs to $\mathbf{E}_1$ and gives the desired transposition of edges (the composition $\varphi\circ\psi\circ\chi$ basically restores the ``mess'' provoked by $m$ on the other cubies). \\
\noindent
The analogous result for $\mathbf{E}_2$ can be proved similarly using the move $ e_2 = [C_{L,2}^{-1},[L,U^{-1}]] $ which produces any 3-cycle on the external edges (Figure \ref{fig: triciclo E2}) and the move $n = C_{R,2}^{2}B^{2}D^{2}$ \\ $C_{L,2}^{-1}D^{2}C_{R,2}D^{2} 
C_{R,2}^{-1}D^{2}F^{2}C_{R,2}^{-1}F^{2}C_{L,2}B^{2}C_{R,2}^{2} $ gives an odd permutation in $\mathbf{E}_2$.  
\begin{center}
\begin{figure}[h]

\RubikCubeSolved
\begin{minipage}{10cm}
\centering


\begin{tikzpicture}[scale=0.65]
\DrawNCubeAll{6}{O}{G}{W}

\node at (-0.8, 3.2) {$\bullet$};

\node at (-1.3, 4.25) {$\bullet$};

\node at (1.3, 5.8) {$\bullet$};

\end{tikzpicture}
\end{minipage}

\caption{The 3 cycle induced by the move $e_2$ on the most external pair of edges.}\label{fig: triciclo E2}
\end{figure}
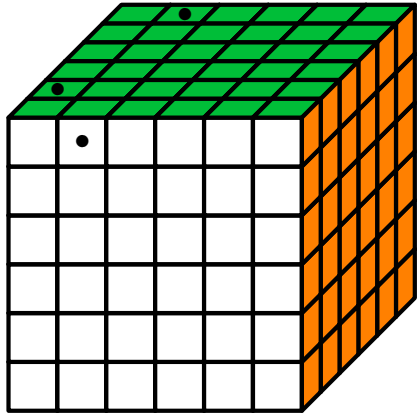
\end{center}

\end{proof}


\begin{remark}\label{rem: l'n si generalizza dal 6}
The strategies adopted to prove Theorems \ref{th: Z_1 = A24}, \ref{Ze=S48} and \ref{th: E e S_e} can be easily extended to the arbitrary $n\times n \times n$ cube by defining the appropriate set of moves inducing 3-cycles on any edge, center corners and center edge. 
\end{remark}
\subsection{Proof of Theorem \ref{th: first law per il cubo pari} for n=6.}\label{subsec: proof caso n=6}

Notice that a configuration in this case looks like $ (\sigma, \tau_1, \tau_2, \rho_{c_{1}}, \rho_{c_2}, \rho_{e_{2}}, x, y_1 , y_2) $ and it is valid if and only if  
\begin{enumerate}
\item $ sgn (\sigma)=sgn (\rho_{c_{1}})=sgn (\rho_{c_{2}}), $
\item $ \sum_{i} x_{i}\equiv 0 $ mod 3,
\item $y_{k_{ j_{t, s}}} = 1 - \delta_{t,s} $,

where $\delta_{a,a}=\delta_{b,b}=1$,  $\delta_{a,b}=\delta_{b,a}=0$, with $j=1,\dots ,12$ and $k=1,2$,
\item  $sgn(\rho_{e_{2}})= +1$.
\end{enumerate}

\begin{proof}
The first implication is proved by checking that all the basic moves generating $ \mathbf{M}_6 $, and thus $ \mathbf{G}_6 $, preserve conditions (1), (2), (3) and (4). The generators of $ \mathbf{M}_6 $ are $ R,L, U, D, F, B $ plus $ C_{R,k},C_{L,k}, C_{U,k}, C_{D,k}, C_{F,k}, C_{B,k} $, with $k\in\{ 1,2\}$.

Condition (1) affects only corners and central corners, so it is enough to check moves permuting those pieces. In particular $ R,L, U, D, F, B $, act as odd permutations on both corners and center corners; $ C_{f} $ with $ f\in \{ R,L, U, D, F, B \} $ act as identity on corners and center corners in the outer circle ($k=2$) and induce two 4-cycles on center corners in the first circle; similarly, moves $ C_{f,2} $ with $ f\in \{ R,L, U, D, F, B \} $ act as identity on corners and center corners in the first circle ($k=1$) and as even permutations (two 4-cycles) on the center corners in the second circle. Therefore, in any case, $ sgn (\sigma)=sgn (\rho_{c_{1}})=sgn (\rho_{c_{2}}) $. 

Condition (2) concerns only corners' orientations, hence its preservation is proved exactly the same way as for the Rubik's Cube, see \cite{Bande82}. 

Pairs of edges are permuted by the moves $ C_{R,k},C_{L,k}, C_{U,k}, C_{D,k}, C_{F,k}, C_{B,k} $. It is immediately checked that any $ C_{f,k} $, with $k\in\{ 1,2\}$, preserves condition (3). \\
\noindent
As for condition (4), all the moves involving center edges permute them of an even permutation, since all of them consist of two pairs of 4-cycles.

For the converse, suppose the Cube is in a configuration $ (\sigma, \tau_1 , \tau_2 ,  \rho_{c_{1}}, \rho_{c_2}, \rho_{e_{2}}, x, y_1 , y_2) $, satisfying conditions (1), (2), (3) and (4). 

 By condition (1), $ sgn(\sigma ) = sgn (\rho_{c_{1}})=sgn (\rho_{c_{2}}) $; this implies that either $ \sigma $, $ \rho_{c_{1}} $ and $\rho_{c_{2}}$ are all odd or even permutations. If all signs are odd, it is enough to apply for example $ R $ (it is not the only possibility), with the effect of getting $ sgn(\sigma ) = sgn (\rho_{c_{1}}) = sgn (\rho_{c_{2}})= +1 $. Having $ sgn(\sigma ) = sgn (\rho_{c_{1}}) =sgn (\rho_{c_{2}})= +1 $ we can apply Theorems \ref{C=A8} and \ref{th: Z_1 = A24}, which imply the existence of moves $ c\in \mathbf{C} $, $ z_1\in\mathbf{Z}_{c_1} $ and $ z_2\in\mathbf{Z}_{c_2} $ such that 
 $$ (c\circ z_1\circ z_2)\cdot(\sigma, \tau_{1},\tau_{2}, \rho_{c_{1}}, \rho_{c_2}, \rho_{e_{2}}, x, y_1 , y_2) = (Id, \tau_1,\tau_2, Id, Id, \rho_{e_{2}}, x, y_1 , y_2).   $$ 
\noindent
By Theorem \ref{th: E e S_e}, $ \mathbf{E}= \mathbf{E}_1\times\mathbf{E}_2\cong S_{24}\times S_{24} $, hence there exists a move $ e = (e_{1}, e_{2})\in \mathbf{E} $ such that 
$$ e\cdot (Id, \tau_1 , \tau_2 , Id, Id, \rho_{e_{2}}, x, y_1 , y_2)  = (Id, Id,Id,Id, Id, \rho_{e_{2}}, x, y_1 , y_2). $$
\noindent
Since by Condition (4), $sgn(\rho_{e_{2}})=+1$, applying Theorem \ref{Ze=S48} we can find a move $q\in \mathbf{Z}_e$ such that
 $$ q\cdot (Id, Id,Id, Id, \rho_{e_{2}}, x, y_1 , y_2) = (Id, Id,Id, Id, Id, x, y_1 , y_2).$$  
\noindent
The cube has been brought to a configuration where all cubies are correctly positioned, altought not correctly oriented. It follows from condition (3) that, whenever edges are correctly positioned, then they are also correctly oriented, hence $ y_1 = y_2 = 0 $. It only remains to orient corners correctly, but in the configuration $ (Id, Id, Id, Id, x, 0,0)  $ the Cube has been reduced to a big Rubik's Cube and, since condition (2) is fulfilled, corners can always be correctly oriented, see \cite{Bande82}. Therefore the initial configuration stands in the orbit of $ (\sigma, \tau_1,\tau_2, \rho_{c_{1}}, \rho_{c_2}, \rho_{e_{2}}, x, y_1 , y_2) $, hence the latter is valid.
\end{proof}

The reader may notice that the above displayed proof can be generalized to prove the first law of cubology for the `even cube' (Theorem \ref{th: first law per il cubo pari}) once the results on the subgroups of the Cube have been also proved (see Remark \ref{rem: l'n si generalizza dal 6}).

\subsection{The odd $n\times n\times n $ Rubik's Cube}\label{subsec: n dispari}

As any other cube, the number of corner cubies in the `odd cube' is 8, while the number of any other kind of piece (edges and centers) depends on $ n $. As previously observed, the most internal center cubie is \textit{fixed} in every face. The total numbers of center cubies and edges are the same introduced in \eqref{eq: numero dei centri caso pari} and in \eqref{eq: numero di edge}, respectively.

Notice that for $ n= 3 $, i.e. the Rubik's Cube, we have $ c = 6 $ and $ e = 12 $. 

In the `odd cube', exactly as in the particular case of Professor's Cube, edges can be splitted into single edges and coupled edges: the former are those 12 edges living in the most internal circle (in correspondence of the fixed center), while the latter are the edges living in the remaining circles. 

The positions and the orientations of corners can be mathematically described by a permutation $ \sigma\in S_{8} $ and a vector $ x\in (\mathbb{Z}_{3})^{8} $, respectively.

A permutation in $ S_{c} $ - with $ c $ defined in \eqref{eq: numero dei centri caso pari} is not the best candidate to describe the position of centers, as it does not take into account the existing difference between moving centers and fixed ones. Furthermore, a more refined description of center cubies shall be based on concentric circles, similarly as for the `even case'. 
Recall that by $ k = 0 $ we indicate the most internal circle, namely the degenerate circle consisting of the fixed center of each face only. 
It happens then that cubies standing in a circle, with exception of the most internal one ($ k= 0 $), can be divided into center corners and center edges: the former are the four cubies (in each circle) situated on the diagonals (with respect to the most internal circle) and the latter are those standing close to the side of the most internal circle. 
Adopting the same nomenclature of the ``even case'', we refer to $K$ as the total number of circles: in this case, $K=\frac{n-3}{2}$
Center edges may never take a (spatial) position occupied by center corners and vice versa. 
Notice that, for any circle, the number of center corners is equal to 24; hence the position assumed by any of them is described by using a permutation $ \rho_{c_{k}}\in S_{24} $, where the index $1\leq k\leq K$ refers to the circle (for example, $ \rho_{c_{1}} $ describes the position of the cubies in the most internal circle). On the other hand, the total number of center edge cubies is equal to $24K^2$ and the number $ z_k $ of center edges living in the $ k $-th circle, $k\geq 1$, is $24(2k-1)$. 

For this reason, a permutation $ \rho_{e_{k}}\in S_{z_{k}} $ describes the position assumed by center edge cubies. 

Established $ e $ being the number of edges, a permutation in $ S_{e} $ is not refined enough to describe positions of edges, as it does not take into account the distinction between single and coupled edges. Clearly, the positions of single edges may be encoded by $ \tau\in S_{12} $ (as they are only twelve). Any remaining edge is part of a pair (coupled edge) and can be moved by one among $ C_{R,k} $, $ C_{L,k} $, $ C_{U,k} $, $ C_{D,k} $, $ C_{F,k} $, $ C_{B,k} $. For instance, the companion of an edge moved by  $ C_{R,k} $ ($ C_{U,k} $, $ C_{F,k} $, respectively) is moved by $ C_{L,k} $ ($ C_{D,k} $, $ C_{B,k} $ respectively). For any $ k $, the number of (coupled) edges is equal to 24. Therefore, the position of any coupled edge can be described introducing $ \tau_k\in S_{24} $, with $k$ ranging over the number of circles. Furthermore, in any pair of edges one is of type $ a $ and its companion is of type $ b $, adopting, for each $ k $, the convention introduced for the Professor's Cube and for the `even cube'. Clearly also spatial position are divided into type $ a $ and $ b $, remembering the concept of type is used both to describe a property of (edge) cubies and of (spatial) positions.

Also orientation's vectors shall be differentiated for single and coupled edges: a vector $ z\in(\mathbb{Z}_{2})^{12} $ works for single edges, while vectors $ y_{k}\in(\mathbb{Z}_{2})^{24} $, for each $ k $, represent orientation of coupled edges in any circle\footnote{As for the ``even'' Cube, we adopt here the same terminology for edge cubies, namely their spatial position is described by a number $k_{j_t}$ (see Subsection \ref{subsec: n pari} for details).}. 

A configuration for the $ n\times n$ `odd' Rubik's Cube is a tuple $ (\sigma, \tau, \tau_{_k}, \rho_{c_{k}}, \rho_{e_{k}}, x, y_{_k},z) $, with $ \sigma\in S_{8} $, $ \tau\in S_{12} $, $ \tau_{_k}\in S_{24} $, $ \rho_{c_{k}}\in S_{24} $, $ \rho_{e_{k}}\in S_{z_{k}} $, for each $ k $ and $ x\in (\mathbb{Z}_{3})^{8} $, $ y_{_k}\in(\mathbb{Z}_{2})^{24} $, $ z\in(\mathbb{Z}_2)^{12} $.

The initial configuration is the tuple where all permutations are the respective identities and any vector's component is equal to 0. A configuration is valid if and only if it stands in the orbit of the initial configuration, i.e. if it might be obtained by the application of a finite sequence of moves from the initial configuration.  \\
The `first law of Cubology' for the odd Cube is a substantial generalization of the claim for the Professor's Cube, indeed:
\begin{theorem}\label{th: first law per il cubo dispari}
A configuration $ (\sigma, \tau, \tau_{_k}, \rho_{c_{k}}, \rho_{e_{k}}, x, y_{_k},z) $ of the `odd' Rubik's Cube is valid if and only if: 
\begin{enumerate}
\item[\emph{1.}] $ sgn (\sigma) = sgn (\tau) = sgn (\rho_{c_k}) $, for any $ k $, $1\leq k\leq K$,
\item[\emph{2.}] $ sgn (\tau_k) = sgn (\sigma)sgn (\rho_{e_k}) $, for any $ k $, $1\leq k\leq K $,
\item[\emph{3.}] $ \sum_{i}x_i\equiv 0 $ \emph{mod 3}, 
\item[\emph{4.}] $ \sum_{i}z_{i}\equiv 0 $ \emph{mod 2}, 
\item[\emph{5.}] $y_{k_{ j_{t, s}}} = 1 - \delta_{t,s} $,
\end{enumerate}
where $\delta_{a,a}=\delta_{b,b}=1$,  $\delta_{a,b}=\delta_{b,a}=0$.
\end{theorem}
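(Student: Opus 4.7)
The plan is to mirror the strategy of the proof of Theorem \ref{the: condizioni5x5} for the Professor's Cube, extended to an arbitrary number of concentric circles $1\leq k \leq K = (n-3)/2$. As anticipated in Remark \ref{rem: l'n si generalizza dal 6}, the subgroup-structure results proved for $n=5$ (Theorems \ref{C=A8}, \ref{E=A12}, \ref{Zc=A24}, \ref{Ec=S24}, \ref{Ze=S24}) will generalize to arbitrary odd $n$ without conceptual change. Concretely, for each $k$ one can exhibit commutator moves, obtained by replacing the central-slice letters $C_f$ appearing in the $n=5$ commutators $z,w,e$ by the appropriate $C_{f,k}$, each producing a 3-cycle on a chosen class of pieces in circle $k$ while leaving every other piece fixed. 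Conjugating by elements of $\mathbf{G}_n$ then realizes every 3-cycle and yields the inclusion of the appropriate alternating group into $\mathbf{C}$, $\mathbf{E}$, each $\mathbf{Z}_{c,k}$, each $\mathbf{E}_{c,k}$, and each $\mathbf{Z}_{e,k}$. The reverse inclusions follow by the same parity argument as in the $n=5$ case: a sequence of basic moves inducing an odd permutation on one class of pieces necessarily induces an odd permutation on at least one other class, preventing a purely-class-$X$ odd permutation from existing.

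For the forward direction, I would split the generators into $M_1=\{R,L,U,D,F,B\}$ and $M_2=\{C_{f,k}:f\in\{R,L,U,D,F,B\},\ 1\leq k\leq K\}$ and verify conditions (1)--(5) move by move. Moves in $M_1$ act as single 4-cycles on corners, on single edges and on center corners in every circle, together with two 4-cycles on the 24 coupled edges of every circle; hence conditions (1) and (2) are preserved, while (3) and (4) follow as in the Rubik's Cube and (5) by the $a$/$b$-position argument of Theorem \ref{the: condizioni5x5}. Moves in $M_2$ act trivially on corners and single edges, so conditions (3) and (4) are automatic; the crucial observation is that $C_{f,k}$ induces an even permutation on the center corners it touches (two 4-cycles) and odd permutations of matching sign on the coupled edges and on the center edges in the circle under the slice, so that the product $sgn(\sigma)sgn(\rho_{e_k})sgn(\tau_k)$ is preserved and condition (2) continues to hold after the move.

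For the backward direction, starting from a configuration $(\sigma,\tau,\tau_k,\rho_{c_k},\rho_{e_k},x,y_k,z)$ satisfying (1)--(5), I would proceed by successive reductions. First, if $sgn(\sigma)=-1$ a single generator in $M_1$ reduces it to $+1$; condition (1) then yields $sgn(\tau)=sgn(\rho_{c_k})=+1$ for every $k$, so by the alternating-group identifications of $\mathbf{C}$, $\mathbf{E}$ and each $\mathbf{Z}_{c,k}$ a suitable composition of elements of these subgroups brings $\sigma$, $\tau$ and every $\rho_{c_k}$ to the identity. Secondly, condition (2) now reduces to $sgn(\tau_k)=sgn(\rho_{e_k})$; if both signs are $-1$ in some circle $k$, apply a move that simultaneously flips them (such as $C_{R,k}$ followed by a corrective element of $\mathbf{Z}_{e,k}$), and then use $\mathbf{E}_{c,k}\cong\mathcal{A}_{24}$ together with $\mathbf{Z}_{e,k}\cong\mathcal{A}_{24(2k-1)}$ to bring $\tau_k$ and $\rho_{e_k}$ to the identity. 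Performing this circle by circle positions all pieces correctly. Finally, condition (5) forces $y_k=0$ once coupled edges sit in their home positions, exactly as in the $n=5$ argument, while conditions (3), (4) allow correction of corner and single-edge orientations as for the Rubik's Cube, see \cite{Bande82}.

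The main obstacle I anticipate is the careful bookkeeping required in the previous step: one must verify that the commutators generating each $\mathbf{E}_{c,k}$ and $\mathbf{Z}_{e,k}$ can indeed be arranged to act strictly within circle $k$ without disturbing the other circles already fixed, and that the parity-flipping move used to switch $sgn(\tau_k)$ and $sgn(\rho_{e_k})$ together can be compensated for so as not to spoil the already-corrected pieces. This is a combinatorial task whose ingredients are all present in the proofs of Theorems \ref{Ec=S24} and \ref{Ze=S24}, but it must be checked circle by circle in a consistent order (e.g.\ from the innermost circle outward). Apart from this, the argument is essentially a line-by-line extension of the Professor's Cube proof in Section \ref{sec: 5x5}.
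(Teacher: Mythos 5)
Your proposal is correct and follows exactly the route the paper itself prescribes: the paper gives no separate proof of Theorem \ref{th: first law per il cubo dispari}, stating only that the strategy of Theorem \ref{the: condizioni5x5} extends, and your circle-by-circle generalization of the subgroup results and of the two implications is precisely that extension. The bookkeeping issues you flag (parity of $C_{f,k}$ on center edges of circles other than $k$, and compensating the parity-flipping move without disturbing already-fixed circles) are real but resolve exactly as in the $n=5$ and $n=6$ arguments.
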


The reader may check that the strategy used to prove Theorem \ref{the: condizioni5x5}, i.e. the special case with $n=5$, may be easily extended to give a purely algebraic proof of Theorem \ref{th: first law per il cubo dispari}.

\section*{Acknowledgements}

The work of the first author is supported by project GBP202/12/G061 of the Czech Science Foundation.
The second author was  supported by Prin 2015 -- Real and Complex Manifolds; Geometry, Topology and Harmonic Analysis -- Italy and also by INdAM and GNSAGA - Gruppo Nazionale per le Strutture Algebriche, Geometriche e le loro Applicazioni. Finally, we thank Fabio Zuddas and an anonymous referee for their many valuable suggestions on previous drafts of the paper.


\end{document}